\newtheorem{definition}{Definition}
\newtheorem{theorem}{Theorem}
\newtheorem{lemma}{Lemma}
\newtheorem{prop}{Proposition}
\newtheorem{lem}{Lemma}
\newtheorem{Remark}{Remark}
\def\be{\begin{equation}}
\def\ee{\end{equation}}
\def\ben{\begin{displaymath}}
\def\een{\end{displaymath}}
\def\baa{\begin{eqnarray}}
\def\eaa{\end{eqnarray}}
\def\ba{\begin{array}}
\def\ea{\end{array}}
\renewcommand{\det}{\operatorname{det}}
\def\be{\begin{equation}}
\def\ee{\end{equation}}
\def\ben{\begin{displaymath}}
\def\een{\end{displaymath}}
\def\baa{\begin{eqnarray}}
\def\eaa{\end{eqnarray}}
\def\ba{\begin{array}}
\def\ea{\end{array}}
\def\2x2{{\left(\!\!\begin{array}{cc}a&b\\c&d\\\end{array}\!\!\right)}}
\newcommand{\dom}{\mbox{dom}}
\newcommand{\R}{{\mathbb{R}}}
\newcommand{\Nbb}{{\mathbb{N}}}
\newcommand{\Cbb}{{\mathbb{C}}}
\newcommand{\Ncal}{{\mathcal{N}}}
\newcommand{\Dhol}{\Delta_{\mathrm{hol}}}
\begin{document}
\title {DtN isospectrality, flat metrics with non-trivial holonomy and comparison formulas for determinants of Laplacian}

\author{Luc Hillairet\footnote{{\bf E-mail: luc.hillairet@univ-orleans.fr}}, Alexey Kokotov\footnote{{\bf E-mail: alexey.kokotov@concordia.ca}} }

\maketitle
\vskip0.5cm
\begin{center}
 MAPMO (UMR 7349 Universit\'e d'Orl\'eans-CNRS)
UFR Sciences, B$\hat{a}$timent de math\'ematiques
rue de Chartres,
BP 6759
45067 Orl\'eans Cedex 02
\end{center}

\vskip0.5cm
\begin{center}
Department of Mathematics and Statistics, Concordia
University, 1455 de Maisonneuve Blvd. West, Montreal, Quebec, H3G
1M8 Canada \end{center}

\vskip2cm
{\bf Abstract.} We study comparison formulas for $\zeta$-regularized determinants of self-adjoint extensions of the Laplacian on flat
conical surfaces of genus $g\geq 2$. The cases of trivial and non-trivial holonomy of the metric turn out to differ significantly.


\vskip2cm

\section{Introduction}
Let $X$ be a compact Riemann surface of genus $g\geq 2$ and let $D=P_1+\dots +P_{2g-2}$ be a positive divisor of degree $2g-2$ on $X$.
For simplicity, we consider only the generic situation: all the points
$P_k$ are assumed to be distinct. Then, due to the Troyanov theorem
(\cite{Troyanov}), there exists a unique (up to rescaling) conformal metric
${\bf m}$ on $X$ that is flat with conical singularities of angle $4\pi$ at $P_1, \dots, P_{2g-2}$.
If the divisor $D$ belongs to the canonical class then there also exists
a holomorphic one form $\omega$ on $X$ with simple zeros at $P_1, \dots,
P_{2g-2}.$ In that situation we thus obtain
\begin{equation}\label{metric}
{\bf m}=|\omega|^2\,,
\end{equation}
so that the Troyanov metric ${\bf m}$ has {\rm trivial holonomy}: i. e. the parallel transport along any loop
that avoids the conical singularities of $X$, maps any tangent vector
to $X$ to itself. Indeed, after the parallel transport along such a
loop, a tangent vector has turned for an angle $2\pi k$ with
$k\in{\mathbb Z}$. On the other hand, any flat metric with conical
singularities that has trivial holonomy is of the form (\ref{metric}).
It follows that the Troyanov metric ${\bf m}$ corresponding to the
divisor $D$ and conical angles $4\pi$ has trivial holonomy
if and only if the divisor $D$ belongs to the canonical class.

Consider now a general flat metric ${\bf m}$ with conical singularities of angle
$4\pi$, and define the Laplace operator $\Delta^{\bf m}: L^2(X, {\bf
  m})\to L^2(X, {\bf m})$ that is associated with it. This operator is
initially defined on the space $C^\infty_0(X\setminus \{P_1, \dots,
P_{2g-2}\})$ of smooth functions that vanish in a
neighbourhood of the conical points. The closure of $\Delta^{\bf m}$ is a symmetric
operator in $L^2(X, {\bf m})$ with deficiency indices $(6g-6, 6g-6).$
It thus has infinitely many self-adjoint extensions. Among these,
the Friedrichs extension, $\Delta_F$, is extensively studied. Explicit formulas for the (modified: i. e.
with zero modes excluded)  $\zeta$-regularized determinant ${\rm
  det}^*\Delta_F$  are found in \cite{KokKor} (in the case of trivial holonomy) and \cite{Kok} (in the general case).

The domain of the Friedrichs extension may include only functions that
are bounded near the conical singularities. As a consequence, when
the metric ${\bf m}$ has trivial holonomy,
the function
\begin{equation}\label{fun}u=\frac{\psi}{\omega}\,,\end{equation}
where $\omega$ is the holomorphic one form that defines the metric ${\bf
  m}=|\omega|^2$ and $\psi$ is an arbitrary holomorphic one form on X
not proportional to $\omega$,  belongs to $L^2(X, {\bf m})$, satisfies
the equation $\Delta^{\bf m}u=0$ in the classical sense outside the
singularities $P_1, \dots, P_{2g-2}$ but {\it does not} belong to the domain of the $\Delta_F$.
It turns out that there exists another natural s. a. extension of
$\Delta^{\bf m}$ that we denote by $\Delta_{\mathrm{hol}}$ whose
domain does contain the functions of the form (\ref{fun}).
Choosing a s.a. extensions is equivalent to prescribe a certain kind
of asymptotic behaviour near the conical points. For instance,
functions in $\dom(\Delta_F)$
have to be bounded but can have both holomorphic and antiholomorphic terms in the
asymptotic expansion near the conical points. On the contrary the
functions in $\dom(\Delta_{\mathrm{hol}})$ are not necessarily bounded
but may only have holomorphic terms in this expansion (hence the
name; see section \ref{sec:extensions}).

 In \cite{HK} it was found a general comparison formula
relating ${\rm det}^*\Delta_F$ to the determinant of any (regular) s. a. extension of the Laplacian for any flat conformal conical
metric on $X$ (with arbitrary conical angles).
The main player in this comparison formula is the so-called
$S$-matrix, $S(\lambda)$, of the conical surface $X$.
The goal of this paper is twofold. First we will apply the results of
\cite{HK} to relate the modified zeta-regularized determinant
$\det^*(\Delta_{\mathrm{hol}})$ with the determinant of the Friedrichs
extension. Then we will study in full detail the $S$-matrix in the
special case of metrics with conical angles $4\pi$: we will find explicit expressions for the
 entries of $S(\lambda)$ and their derivatives at $\lambda=0$ through invariant holomorphic objects related to the Riemann surface $X$.
 This will result in a nice explicit expression for the determinant of the
 holomorphic s.a. extension. The following theorem illustrates our
 result in genus $2.$

\begin{theorem}
Let $X$ be a Riemann surface of genus $2.$ For any two points $P_1$ and
$P_2$ on $X$, let ${\bf m}$ be the Troyanov flat metric with conical
singularities of angle $4\pi$ at $P_1$ and $P_2$ and let $\Delta_F$ and
$\Delta_{\mathrm{hol}}$ be the corresponding s.a. extensions of
$\Delta^{\bf m}.$\hfill \\

If the divisor $P_1+P_2$ {\it is not in the canonical class} or,
equivalently, if ${\bf m}$ {\it does not have trivial holonomy}, then
 \begin{equation}\label{showmain}
 {\rm det}^*\Delta_{hol}=c_2\left|
 \begin{matrix} B(P_1, P_1) \ \ B(P_1, P_2)\\
 B(P_2, P_1) \ \ B(P_2, P_2)
 \end{matrix}
     \right|{\rm det}^*\Delta_F\,,\end{equation}
 where $B(P_i, P_j)$ is the Bergman reproducing kernel for the
 holomorphic differentials on $X$ which is evaluated at $P_i,~P_j$
using distinguished holomorphic local parameters. The constant $c_2$ is
universal. \hfill \\

If the divisor $P_1+P_2$ {\it is in the canonical class} or,
equivalently, if ${\bf m}$ {\it has trivial holonomy}, then
 \begin{equation}\label{rav}
\det^*(\Delta_{\mathrm{hol}})\,=\,\det^*(\Delta_F).
\end{equation}
\end{theorem}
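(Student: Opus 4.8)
The overall strategy is to specialize the general comparison formula of \cite{HK} to the case of conical angles $4\pi$ in genus $2$, and to read off the two cases from the structure of the $S$-matrix at $\lambda=0$. The plan is to proceed as follows.

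First I would recall the comparison formula from \cite{HK} relating $\det^*\Delta_{\mathrm{hol}}$ to $\det^*\Delta_F$; abstractly it expresses the ratio of determinants as a finite-dimensional determinant built from the $S$-matrix $S(\lambda)$ (and possibly its derivative) at $\lambda=0$, the relevant block being the one that selects, at each conical point $P_k$, the ``holomorphic'' asymptotic channel distinguishing $\dom(\Delta_{\mathrm{hol}})$ from $\dom(\Delta_F)$. Since each of the two conical points of angle $4\pi$ contributes exactly one such channel, this block is a $2\times 2$ matrix, and the formula reduces to $\det^*\Delta_{\mathrm{hol}} = c_2\,\det(\text{that }2\times2\text{ matrix})\cdot\det^*\Delta_F$ with $c_2$ a universal constant coming from the local model on a $4\pi$-cone.

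Second, and this is the technical heart, I would compute the entries of that $2\times 2$ matrix explicitly in terms of holomorphic data on $X$. The claim is that, in the distinguished local parameters attached to the Troyanov metric at $P_1,P_2$, these entries are precisely the values $B(P_i,P_j)$ of the Bergman reproducing kernel for holomorphic differentials. The natural route is: a function $u$ harmonic for $\Delta^{\bf m}=|\omega|^2$ away from the singularities is, locally, the ratio of a holomorphic (plus antiholomorphic) differential by $\omega$ in the trivial-holonomy case, and more generally is governed by the holomorphic prime-form / Bergman-kernel data; the scattering coefficients that encode ``how a holomorphic singularity at $P_i$ propagates to a holomorphic term at $P_j$'' are exactly the off-diagonal Bergman values $B(P_i,P_j)$, while the diagonal entries $B(P_i,P_i)$ arise as the regularized (finite part) self-interaction. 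Concretely I would build the two singular harmonic functions with a prescribed holomorphic pole at $P_1$ and at $P_2$, expand each near both conical points in the distinguished parameter, and identify the resulting $2\times2$ coefficient matrix with $\big(B(P_i,P_j)\big)_{i,j=1,2}$; invoking the genus-$2$ identifications of the Bergman kernel with theta-functional / period quantities if a cleaner closed form is wanted. This yields \eqref{showmain}.

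Third, for the trivial-holonomy case I would argue that the distinguishing block degenerates so that the finite-dimensional determinant equals $1$. When $P_1+P_2$ is canonical, ${\bf m}=|\omega|^2$ and the functions \eqref{fun} already lie in $L^2$; the ``holomorphic'' channel at each $P_k$ is then spanned by globally defined objects ($\psi/\omega$ for holomorphic $\psi$), and the scattering map between the holomorphic channels becomes the identity (equivalently, the relevant $S$-matrix block at $\lambda=0$ is the identity), so the $2\times2$ determinant is $1$ and \eqref{rav} follows. The main obstacle I anticipate is the second step: correctly normalizing the distinguished local parameters and the regularization of the diagonal self-interaction so that the constant $c_2$ is genuinely universal (independent of $X,P_1,P_2$) and the off-diagonal/diagonal entries assemble exactly into the Bergman kernel rather than into some kernel differing by local conformal factors; keeping track of these normalizations carefully is where the real work lies, while the trivial-holonomy case should then be essentially immediate.
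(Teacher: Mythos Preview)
Your plan for the non-trivial-holonomy case is essentially the paper's: the comparison formula from \cite{HK} reduces to $\det(\Delta_{\mathrm{hol}}-\lambda)=C_g\det T(\lambda)\det(\Delta_F-\lambda)$ with $T(\lambda)=S_{ha}(\lambda)$, and Proposition~\ref{scatt} shows $T(0)=-\pi\mathcal{B}$ with $\mathcal{B}=\big(B(P_i,P_j)\big)$. Since in the non-canonical case $\dim\ker\Delta_{\mathrm{hol}}=g-1=1=\dim\ker\Delta_F$ (Lemma~\ref{lastlem}), one simply evaluates at $\lambda=0$ and absorbs the $(-\pi)^2$ into $c_2$.

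However, your third step contains a genuine error. In the trivial-holonomy case the block $T(0)$ is \emph{not} the identity: by Proposition~2, when $P_1+P_2$ is canonical the Bergman matrix $\mathcal{B}$ has rank $<g$, so $\det T(0)=0$. This is consistent with the fact that $\dim\ker\Delta_{\mathrm{hol}}=g=2$ jumps by one, so that $\det T(\lambda)$ must vanish to order $g-1=1$ at $\lambda=0$. Your claimed mechanism (``the scattering map between holomorphic channels becomes the identity'') is therefore wrong, and there is no way to extract \eqref{rav} directly from the comparison formula and the value $T(0)$ alone.

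The paper instead proves \eqref{rav} by a completely different, and rather elegant, argument: when ${\bf m}=|\omega|^2$ one has the factorizations $\tfrac14\Delta_F=D_z^*D_z$ and $\tfrac14\Delta_{\mathrm{hol}}=D_zD_z^*$ with $D_z=\omega^{-1}\partial_z$ (Proposition~\ref{iso}). This gives DtN isospectrality of $\Delta_F$ and $\Delta_{\mathrm{hol}}$ (equal nonzero spectra with multiplicities), from which $\det^*\Delta_{\mathrm{hol}}=\det^*\Delta_F$ is immediate. The comparison formula is then used \emph{in the other direction} to deduce the nontrivial identity $\det T(\lambda)=C_g\lambda^{g-1}$ (equation~\eqref{IDENTITY}), rather than to prove \eqref{rav}.
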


In the trivial holonomy case, it was previously observed by the first
author (\cite{H}) that the operators $\Delta_F$ and
$\Delta_{\mathrm{hol}}$ are DtN isospectral (see the precise definition below). This
 immediately implies that their $\zeta$-regularized determinants coincide.

 It should be noted that when the divisor $P_1+P_2$ enters the canonical class the matrix
 $$\left(
 \begin{matrix} B(P_1, P_1) \ \ B(P_1, P_2)\\
 B(P_2, P_1) \ \ B(P_2, P_2)
 \end{matrix}\right)
     $$
 becomes degenerate and the dimension of the kernel of
 $\Delta_{\mathrm{hol}}$ gets larger (by one unit).

 Relation (\ref{rav}) together with the comparison formula from \cite{HK} and the explicit information about the entries of $S(\lambda)$
obtained in the present paper, lead to identity (\ref{result}) below, which presents
the second main result of the paper. The latter identity seems to be
rather nontrivial and we failed to find any direct way to prove it.

 We put a certain effort to make it possible to read this paper independently of \cite{HK} (which is more technically involved):
 in our case of conical singularities of angle $4\pi$  the results and notations from \cite{HK} can be significantly simplified,
 we will briefly remind the reader all the constructions and main ideas from \cite{HK} using this opportunity
 to make the presentation more transparent.

{\bf Acknowledgements.}
 We thank D. Korotkin for numerous discussions. We are extremely grateful to K. Shramov and N. Tyurin for their help with the proof of Proposition 2; the second author also acknowledges
 useful discussions with P. Zograf, in particular, the question about the relation between ${\rm det}\,\Delta_F$ and
${\rm det}\,\Delta_{\mathrm{hol}}$  was raised by P. Zograf in one of these discussions.

 The research of AK was supported by NSERC.

\section{$S$-matrix of Euclidean surface with conical singularities}
In this section we briefly remind the setting of paper \cite{HK} adopting it to our (less general) situation.

\subsection{Self-adjoint extensions of Laplacian}\label{sec:extensions}
Let $X$ be a Riemann surface of genus $g\geq 2$ and let ${\bf m}$ be a flat conformal metric on $X$ with conical singularities $P_1, \dots, P_{2g-2}$ of conical
angles $4\pi$. Let $\Delta$ be the closure of the Laplace operator (corresponding to ${\bf m}$) with domain
$$C^\infty_0(X\setminus\{P_1, \dots, P_{2g-2}\})\subset L_2(X, {\bf m})$$
and let $\Delta^*$ be its adjoint operator. Let $\xi_k$ be the distinguished holomorphic local parameter in a vicinity of $P_k$, i . e.
$${\bf m}=4|\xi_k|^2|d\xi_k|^2$$
near $P_k$.  Near any $P_k$, any $u\in \dom (\Delta^*)$ has an
asymptotic expansion
\begin{equation}
\begin{split}
u(\xi_k, \bar \xi_k)& = a_k(u)\frac{i}{\sqrt{2\pi}}\log|\xi_k|\,+\,b_k(u) \frac{1}{2\sqrt{\pi}}
\frac{1}{\xi_k}+c_k(u)\frac{1}{2\sqrt{\pi}}\frac{1}{\bar \xi_k}\\
& + \frac{i}{\sqrt{2\pi}}d_k(u) \,+\, \frac{1}{2\sqrt{\pi}}e_k(u) \xi_k \,+\,\frac{1}{2\sqrt{\pi}}f_k(u)\bar{\xi_k}\,\\
& +o(|\xi|).
\end{split}
\end{equation}
the factors  $\frac{1}{2\sqrt{\pi}}$, etc are introduced in order to get the standard Darboux basis (see formula (\ref{symp}) below).

Let $u, v\in {\cal D}(\Delta^*)$, then Green formula implies the relation
\begin{equation}\label{symp}
\begin{split}
\Omega([u], [v])& :=\langle\Delta^*u, \bar v\rangle-\langle u,
\Delta^*\bar v\rangle\\
& =
\sum_{k=1}^{2g-2}X_k(u)\left(\begin{matrix}0\ \ \ -I_3\\I_3 \ \
    0 \end{matrix}\right){X_k(v)}^t\,,
\end{split}
\end{equation}
where $X_k(u)=(a_k(u), b_k(u), \dots, f_k(u))$ and
$\Omega$ is the symplectic  form on the factor space $\dom (\Delta^*)/\dom(\Delta)$.

The self-adjoint extensions of $\Delta$ are in one to one correspondence with the Lagrangian subspaces of $\dom (\Delta^*)/\dom(\Delta)$;
the Friedrichs extension corresponds to the lagrangian subspace
$$a_k(u)=b_k(u)=c_k(u)=0, \ k=1, \dots, 2g-2\,,$$
the holomorphic extension corresponds to the lagrangian subspace
$$a_k(u)=c_k(u)=f_k(u)=0, \ k=1, \dots, 2g-2\,.$$

\subsection{Special solutions and $S$-matrix}
For any $s\in \R,$ denote by $H^s_F(X) := \dom
(\Delta_F^{\frac{s}{2}}).$ Since $X$ is compact, the spectrum of $\Delta_F$ is discrete and
consists only in finite multiplicity eigenvalues. Let
$(\phi_\ell)_{\ell \geq 0}$ be a real orthonormal basis of
eigenfunctions for $\Delta_F$ and denote by $\lambda_\ell$ the
eigenvalue that corresponds to $\phi_\ell.$ This basis will be fixed
throughout the paper.

We define the linear functionals $\Lambda_{\xi_k}, \Lambda_{\bar{\xi}_k}$
  and $\Lambda_{0k}$ on $H^2_F(X)$ in such a way that any $u\in
  H^2_F(X) (:= \dom(\Delta_F))$ has the following asymptotic expansion
  near $P_k$ :
\[
u(\xi_k,\bar \xi_k) \,=\, \frac{i}{\sqrt{\pi}} \Lambda_{0k}(u) \,+\,
\frac{1}{2\sqrt{\pi}}\Lambda_{\xi_k}(u)\cdot
\xi_k\,+\,\frac{1}{2\sqrt{\pi}}\Lambda_{\bar \xi_k}(u)\cdot \bar \xi_k\,+\,o(|\xi_k|).
\]
Since $\Lambda_{\xi_k}$ are (continuous) linear functionals on
$H^2_F(X)$ it follows that the sequence
$\left( \frac{\Lambda_{\xi_k}(\phi_\ell)}{\lambda_\ell
    -\lambda}\right)_{\ell \geq 0}$ is in $\ell^2(\Nbb,\Cbb).$

Let $\lambda$ do not
belong to the spectrum $\Delta_F$.  We define $G_{\xi_k}$ by
\[
G_{\xi_k}:= \left( \Delta_F-\lambda\right )^{-1} \Lambda_{\xi_k}
\]
and $G_{\bar \xi_k},~G_{0k}$ by a similar formula.

By definition, we have
\begin{equation}\label{eq:expGk}
\forall k, ~ G_{\xi_k}(\cdot,\lambda) \,=\, \sum_{\ell \geq 0}
\frac{\Lambda_{\xi_k}(\phi_\ell)}{\lambda_\ell -\lambda}\phi_\ell.
\end{equation}
There are similar expressions for $G_{\bar \xi_k}.$

We recall that $G_{\xi_k}$ can be constructed in the following
way. Start with $F_{\xi_k}:= \frac{1}{2\sqrt{\pi}} \cdot \frac{1}{\xi_k}
\chi(\xi_k,\bar\xi_k)$ where $\chi$ is a cutoff function that is
identically one near $P_k.$ We then have
\[
G_{\xi_k} = F_{\xi_k} - (\Delta_F-\lambda)^{-1} (\Delta^*-\lambda)F_{\xi_k}.
\]
Thus, by construction, $G_{\xi_k}-F_{\xi_k}$ belongs to $H^2_F(X)$ and
so does $\partial_\lambda G_{\xi_k}.$ The latter has the following
(convergent in $L^2(X)$) expression in the chosen orthonormal basis:
\begin{equation}\label{eq:expdlG_k}
\partial_{\lambda} G_{\xi_k}(\cdot,\lambda) \,=\, \sum_{\ell \geq 0}
\frac{\Lambda_{\xi_k}(\phi_\ell)}{(\lambda_\ell -\lambda)^2}\phi_\ell.
\end{equation}

By evaluating the linear functionals $\Lambda$ on
$G_{\xi_k}-F_{\xi_k}$, we obtain a $(6g-6)\times(6g-6)$ matrix
$S(\lambda).$

In what follows we will often use the following convenient and
self-explanatory notation: the entries of $S$ will be denoted by
expressions of the type  $S^{\bar \xi_k \xi_j}(\lambda)$ that will
correspond to the coefficient of $\xi_j$ in the expansion of
$G_{\bar \xi_k}.$
Thus, by definition, near $P_k$ we will have
$$G_{\xi_k}=\frac{1}{2\sqrt{\pi}}
\frac{1}{\xi_k}\,+\,\frac{i}{\sqrt{2\pi}}S^{\xi_k0_k}(\lambda)\,+\, \frac{1}{2\sqrt{\pi}}S^{\xi_k \xi_k}(\lambda)\xi_k\,+\,\frac{1}{2\sqrt{\pi}}S^{\xi_k \bar\xi_k}(\lambda)\bar\xi_k+o(|\xi_k|)$$
as $\xi_k\to 0$ and near $P_j$ we will have
$$G_{\xi_k}=\frac{i}{\sqrt{2\pi}}S^{\xi_k0_j}(\lambda)+\frac{1}{2\sqrt{\pi}}S^{\xi_k \xi_j}(\lambda)\xi_j\,+\,\frac{1}{2\sqrt{\pi}}S^{\xi_k\bar\xi_j}(\lambda)\bar\xi_j+o(|\xi_j|)$$
as $\xi_j\to 0$.

It is convenient to represent this $S$-matrix with a block structure
$$S(\lambda)=\left(\begin{matrix}S_{00}(\lambda)\ S_{0a}(\lambda)\ S_{0h}(\lambda)\\
S_{a0}(\lambda)\ S_{aa}(\lambda) \ S_{ah}(\lambda)\\
S_{h0}(\lambda)\ S_{ha}(\lambda)\ S_{hh}(\lambda)
      \end{matrix}\right)\,,$$
where the indices $a$ and $h$ stand for holomorphic and
anti-holomorphic. Thus, for instance, the $S_{ah}$ block records
the coefficients of the anti-holomorphic part of the $G_{\xi_k}$:
\[
S_{ah}^{jk} \,=\, S^{\bar \xi_j \xi_k}
\]

 The $S$-matrix is holomorphic outside the spectrum of $\Delta_F$.
The blocks $S_{00}$, $S_{0a}$, $S_{0h}$, $S_{h0}$ and $S_{a0}$ blow up
 as $\lambda\to 0$, whereas the blocks $S_{aa}$, $S_{ah}$, $S_{ha}$
 and $S_{hh}$ are regular at $\lambda=0$.

In the next subsection we find explicit expressions for  $S_{aa}(0)$, $S_{ah}(0)$, $S_{ha}(0)$ and $S_{hh}(0)$.
 These expressions present the first new result of this paper.

\subsection{$S(0)$  through Schiffer and Bergman kernels}
\subsubsection{Some kernels from complex geometry}
Chose a marking for the Riemann surface $X$, i.e. a canonical basis $a_1, b_1, \dots, a_g, b_g$ of $H_1(X, {\mathbf Z})$. Let $\{v_1, \dots,
v_g\}$ be the basis of holomorphic differentials on $X$ that is normalized via
$$\int_{a_i}v_j=\delta_{ij}\,.$$
Then the matrix of $b$-periods of the marked Riemann surface $X$ is
defined as
$${\mathbb B}=[ b_{ij}]_{i,j\leq g},~~\mbox{with}~~b_{ij}:= \int_{b_i}v_j\,.$$

Let $W(\,\cdot\,,\,\cdot\,)$ be the canonical meromorphic
bidifferential on $X\times X$, (see \cite{Fay92}). Recall that it is
symmetric ($W(P,Q)=W(Q, P)$), and normalized by
$$\int_{a_i}W(\,\cdot\,, P)=0.$$
Recall also the following identities:
$$\int_{b_j}W(\,\cdot\,, P)=2\pi iv_j(P).$$
The only pole of the bidifferential $W$ is a double pole along the diagonal
$P=Q.$ In any holomorphic local parameter $x(P),$ the following
asymptotic expansion near the diagonal holds:
\begin{equation}\label{funH}W(x(P),
  x(Q))=\left(\frac{1}{(x(P)-x(Q))^2}+H(x(P),
    x(Q))\right)dx(P)dx(Q),\end{equation}
where
$$H(x(P), x(Q))=\frac{1}{6}S_B(x(P))+O(x(P)-x(Q)),$$
as $Q\to P$, with $S_B(\cdot)$ the Bergman projective connection.


The Schiffer bidifferential (or Schiffer kernel) is defined by
$${\cal S}(P, Q)=W(P, Q)-\pi\sum_{i, j}(\Im {\mathbb B})^{-1}_{ij}v_i(P)v_j(Q),$$
and the Schiffer projective connection, $S_{\mathrm{Sch}}$, by the asymptotic expansion
$${\cal S}(x(P), x(Q))=\left(\frac{1}{(x(P)-x(Q))^2}+\frac{1}{6}S_{\mathrm{Sch}}(x(P))+O(x(P)-x(Q))\right)dx(P)dx(Q)\,$$
It follows the equality
\begin{equation}\label{connection}
S_{\mathrm{Sch}}(x)=S_B(x)-6\pi \sum_{i, j}(\Im {\mathbf B})^{-1}_{ij}v_i(x)v_j(x)\,.\end{equation}

In contrast to the canonical meromorphic differential and the Bergman projective connection, the Schiffer bidifferential and the Schiffer projective connection are independent of the marking of the Riemann surface $X$.

We will also need the Bergman kernel which is the reproducing kernel for holomorphic
differentials on $X$. Its expression is given by
\begin{equation}\label{Berg}B(x, \bar x)=\sum_{ij}(\Im {\mathbb B})^{-1}_{ij}v_i(x)\overline{v_j(x)}\,.\end{equation}

\subsubsection{Harmonic functions on $X$}
Let $X$ be a compact Riemann surface of genus $g>1$ provided with flat conformal metric $m$ with $2g-2$ conical singularities at $P_1, \dots,
P_{2g-2}$ of conical angles $4\pi$. We do not assume that this metric has trivial holonomy: the divisor class $P_1+\dots +P_{2g-2}$
is not necessarily canonical.

The harmonic functions on $X\setminus \{ P_k\}$ which are square-integrable are uniquely determined by
their singular behaviour near the points $P_k.$ Thus, there exists a
unique harmonic function $H_k$ with the asymptotic behavior
$$H_k(\xi_k, \bar\xi_k)=\frac{1}{\xi_k}+a_{kk}+b_{kk}\xi_k+c_{kk}\bar \xi_k+o(|\xi_k|) \ \ \ {\rm at}\ \ P_k$$
$$H_k(\xi_j, \bar \xi_j)=a_{kj}+b_{kj}\xi_j+c_{kj}\bar \xi_j+o(|\xi_j|)\ \ {\rm at }\ \ P_j; \ j\neq k$$
(we remind the reader that $\xi_k$ denotes the distinguished local
parameter for ${\bf m}$ near $P_k$). Comparing with the definition of
$G_{\xi_k}$ we see that
\[
H_k = 2\sqrt{\pi} G_{\xi_k},
\]
and thus
\begin{gather*}
b_{kj}=S^{\xi_k \xi_j}(0)\\
c_{kj}=S^{\xi_k\bar\xi_j}(0).
\end{gather*}

Notice that  $S^{\bar\xi_k \bar\xi_j}(0)=\overline{S^{\xi_k
    \xi_j}(0)}$ and $S^{\bar \xi_k \xi_j}(0)=\overline{S^{\xi_k \bar
    \xi_j}(0)}$ and, therefore, the  $S(0)$ is completely known once we find $b_{kj}$ and $c_{kj}$.
\begin{prop}\label{scatt}
The following relations hold
\begin{gather*}
\begin{split}
S^{\xi_k\xi_k}&=-\frac{1}{6}S_{\mathrm{Sch}}(\xi_k)\Big|_{\xi_k=0}\\
S^{\xi_k\xi_j}&=-{\cal S}(P_k, P_j)\,, k\neq j\\
S^{\xi_k\bar \xi_j}&=-\pi B(P_k, P_j)\,,
\end{split}
\end{gather*}
where the values of (bi)differentials at the points $P_i$ are taken
w. r. t. the distinguished local parameter $\xi_i$.
\end{prop}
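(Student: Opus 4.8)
The plan is to identify the harmonic functions $H_k$ with explicit combinations of the complex-geometric kernels introduced above and then read off the expansion coefficients. The key observation is that the Schiffer bidifferential $\mathcal{S}(\,\cdot\,, P_k)$, viewed in the distinguished local parameter $\xi_k$ near $P_k$, has a double pole $dx(P)/(x(P)-x(P_k))^2$ on the diagonal, which is \emph{not} the simple-pole singularity $1/\xi_k$ that defines $H_k$. So I would not use $\mathcal{S}$ directly but rather its antiderivative along the surface. Concretely, fix a reference point and consider the abelian differential of the third kind, or better, integrate $\mathcal{S}(\,\cdot\,, Q)$ in the variable $Q$: the function $Q\mapsto \int^Q \mathcal{S}(P,\,\cdot\,)$ has, as a function of $P$, a simple pole at $P=Q$ with the correct residue. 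The square-integrability (with respect to $\bf m$) forces us to use the \emph{Schiffer} kernel rather than the canonical bidifferential $W$, because the $v_i(P)v_j(Q)$ correction term is exactly what kills the non-$L^2$ behaviour coming from the periods; this is the reason the marking-independent Schiffer objects appear rather than the marking-dependent Bergman ones.

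More precisely, first I would write down a candidate for $H_k$ as a function on $X\setminus\{P_k\}$ built from $\mathcal{S}$ and $B$: something of the shape
\[
H_k(P) \,=\, \text{(primitive of }\mathcal{S}(\,\cdot\,,P_k)\text{ in the }\xi_k\text{-parameter)} \,+\, \text{(an explicit antiholomorphic piece involving }B(\,\cdot\,,P_k)\text{)},
\]
chosen so that (i) it is square-integrable with respect to ${\bf m}=4|\xi_k|^2|d\xi_k|^2$ near every conical point, (ii) it is harmonic on $X\setminus\{P_k\}$, i.e. its $\bar\partial\partial$ vanishes away from $P_k$ — this will hold because $\mathcal{S}(\,\cdot\,,P_k)$ is a meromorphic (hence $\bar\partial$-closed) differential off the diagonal and $B(\,\cdot\,,P_k)$ contributes the harmonic antiholomorphic term — and (iii) near $P_k$ it has leading singularity exactly $1/\xi_k$. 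By the stated uniqueness of $L^2$ harmonic functions with prescribed singularity, this candidate must coincide with $H_k=2\sqrt{\pi}\,G_{\xi_k}$.

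Next, having pinned down $H_k$, I would compute the Taylor/Laurent coefficients. Near $P_k$: the constant term and the $\xi_k$-coefficient in the primitive of $\mathcal{S}$ are governed by the expansion $\mathcal{S}(x(P),x(Q)) = (1/(x(P)-x(Q))^2 + \tfrac16 S_{\mathrm{Sch}}(x(P)) + O(\cdot))\,dx(P)dx(Q)$; integrating this in $P$ and matching against $1/\xi_k + a_{kk} + b_{kk}\xi_k + \dots$ yields $b_{kk}=-\tfrac16 S_{\mathrm{Sch}}(\xi_k)|_{\xi_k=0}$, the overall sign coming from the orientation of the primitive and the normalization constants $\tfrac{1}{2\sqrt\pi}$. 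The antiholomorphic piece supplies $c_{kk}$ and, more generally, $c_{kj}=-\pi B(P_k,P_j)$ directly from formula (\ref{Berg}) for the Bergman kernel — the factor $\pi$ and sign tracking through. Near $P_j$ with $j\neq k$ there is no pole, so $b_{kj}$ is simply the $\xi_j$-coefficient of the primitive of $\mathcal{S}(\,\cdot\,,P_k)$ evaluated at $P_j$, which is $-\mathcal{S}(P_k,P_j)$ in the distinguished parameters (using symmetry $\mathcal{S}(P,Q)=\mathcal{S}(Q,P)$), and $c_{kj}=-\pi B(P_k,P_j)$ as before. Then $b_{kj}=S^{\xi_k\xi_j}(0)$ and $c_{kj}=S^{\xi_k\bar\xi_j}(0)$ give the claimed identities.

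The main obstacle I expect is item (i): verifying square-integrability of the candidate near the conical points, i.e. checking that the primitive of the Schiffer kernel, which is only a multivalued object a priori (its periods are controlled but need not vanish), genuinely defines a single-valued $L^2$ function on $X$. This is precisely where the $-\pi\sum(\Im\mathbb{B})^{-1}_{ij}v_i v_j$ correction in the definition of $\mathcal{S}$ earns its keep — it annihilates the $b$-periods — but one still has to confront the $a$-periods and the logarithmic ambiguity; the resolution is that adding the harmonic antiholomorphic term built from $B$ (which is a genuine function, not a differential) absorbs the remaining real-period obstruction, using $\Im\mathbb{B}>0$. Getting all the normalization constants and signs consistent between the three conventions in play (the $\tfrac{1}{2\sqrt\pi}$ and $\tfrac{i}{\sqrt{2\pi}}$ factors in the asymptotic expansion, the $\tfrac16$ in the projective-connection expansions, and the $\pi$ in $\mathcal{S}$ and $B$) is the bookkeeping that makes this more delicate than it looks, but it is routine once the structural identification $H_k \leftrightarrow$ primitive of $\mathcal{S}$ plus Bergman term is in place.
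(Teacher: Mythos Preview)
Your overall strategy---construct an explicit candidate for $H_k$, verify it is single-valued and harmonic with the prescribed singularity, invoke uniqueness, and then read off the Taylor coefficients---is exactly what the paper does. The execution, however, differs in the mechanism for achieving single-valuedness, and your proposed resolution of that point is left incomplete.

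The paper does not start from the Schiffer kernel $\mathcal{S}$. Instead it works with the canonical meromorphic bidifferential $W$ directly and builds two meromorphic one-forms
\[
\Omega_k \,=\, -W(\cdot,P_k) + 2\pi i \sum_{\alpha,\beta}(\Im\mathbb{B})^{-1}_{\alpha\beta}\bigl\{\Im v_\beta(P_k)\bigr\}\,v_\alpha,
\qquad
\Sigma_k \,=\, -iW(\cdot,P_k) + 2\pi i \sum_{\alpha,\beta}(\Im\mathbb{B})^{-1}_{\alpha\beta}\bigl\{\Re v_\beta(P_k)\bigr\}\,v_\alpha,
\]
the corrections being chosen so that all $a$- and $b$-periods of $\Omega_k$ and $\Sigma_k$ are \emph{purely imaginary}. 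Then
\[
H_k(Q)\,=\,\Re\!\int_{P_0}^Q \Omega_k \;-\; i\,\Re\!\int_{P_0}^Q \Sigma_k
\]
is automatically a single-valued function on $X\setminus\{P_k\}$, since the real part kills all the periods. The Schiffer projective connection and the Bergman kernel appear only at the very end, when one regroups the coefficients in the local expansions near $P_k$ and $P_j$. This sidesteps the period obstruction you correctly identify: rather than cancel periods between a holomorphic primitive of $\mathcal{S}$ and an antiholomorphic correction, the paper kills them by the ``real part of an integral with purely imaginary periods'' device.

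Your proposed resolution---absorbing the residual periods by ``adding the harmonic antiholomorphic term built from $B$''---is not worked out, and as stated is a bit murky: $B(\cdot,P_k)$ is a $(1,1)$-type object, not a function, so you would still have to integrate a suitable antiholomorphic one-form and then check that its periods exactly balance those of the primitive of $\mathcal{S}(\cdot,P_k)$. This can indeed be done (and once unpacked it is equivalent to the paper's formula), but the paper's construction is the clean way to carry it out and is what makes the sign and constant bookkeeping you mention tractable.
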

\begin{proof} Introduce the one forms $\Omega_k$ and $\Sigma_k$ on $X$:
$$\Omega_k=-W(\,\cdot\,, P_k)+2\pi i\sum_{\alpha, \beta}
\left(\Im {\mathbb B} \right)^{-1}_{\alpha \beta}\left\{\Im v_\beta(P_k)\right\}v_\alpha(\cdot)$$

$$\Sigma_k=-iW(\,\cdot\,, P_k)+2\pi i\sum_{\alpha, \beta}
\left(\Im {\mathbb B} \right)^{-1}_{\alpha \beta}\left\{\Re v_\beta(P_k)\right\}v_\alpha(\cdot)\,,$$
where
$$v_{\beta}(P_k):=v_\beta(\xi_k)|_{\xi_k=0}\,.$$

All the periods of the differentials $\Omega_k$ and $\Sigma_k$ are pure imaginary, therefore,
the following expression correctly defines a function $H_k$ on $X$
\begin{equation}\label{resh}H_k(Q)=\Re\left\{ \int_{P_0}^Q \Omega_k  \right\}-i\Re\left\{\int_{P_0}^Q\Sigma_k\right\}\,\end{equation}
where $P_0\neq P_k$ is an arbitrary base point.

Simple calculation shows that
$$H_k(\xi_k, \bar \xi_k)=\frac{1}{\xi_k}+{\rm const}+\left[-\frac{1}{6}S_B(P_k)+\pi\sum_{\alpha, \beta}(\Im{\mathbb B})^{-1}_{\alpha\beta}v_\alpha(P_k)v_\beta(P_k)\right]\xi_k+$$$$\left[-\pi\sum_{\alpha\beta}(\Im{\mathbb B})^{-1}_{\alpha\beta}\bar v_\alpha(P_k)v_\beta(P_k) \right]\bar\xi_k+o(|\xi_k|)$$
and
$$H_k(\xi_j, \bar \xi_j)={\rm const}+\left[-W(P_k, P_j)+\pi\sum_{\alpha\beta}(\Im{\mathbb B})^{-1}_{\alpha\beta}v_\beta(P_k)v_\alpha(P_j)
\right]\xi_j+$$$$\left[-\pi\sum_{\alpha\beta}(\Im{\mathbb B})^{-1}_{\alpha\beta}v_\beta(P_k)\bar v_\alpha(P_j)    \right]\bar\xi_j+o(|\xi_j|)$$
for $j\neq k$ which implies the proposition.
\end{proof}

\section{Bergman kernel and holomorphic one-forms.}

Let $B(\,\cdot\,,\,\cdot\,)$ be the Bergman reproducing kernel for holomorphic differentials (\ref{Berg}) on $X$, let $P_1, \dots, P_{2g-2}$ be 2g-2 distinct points of $X$ and let $\xi_k$ be a holomorphic local parameter at $P_k$.
Introduce the $(2g-2)\times(2g-2)$ matrix ${\cal B}$:
$${\cal B}=\left [ B(P_j, P_k)\right ]_{ j,k=1, \dots, 2g-2}\,,$$
where the value of $B$ at $P_k$ is taken with respect to the local
parameter $\xi_k$. We observe that the rank of $\mathcal{B}$ is
independent of the choice of the local parameter.

According to proposition \ref{scatt} we have
\[
{\cal B} = -\frac{1}{\pi} S_{ha}(0).
\]

It follows that the $S$ matrix knows properties of the Riemann surface
that are encoded in $\mathcal{B}.$ For instance we have the following Proposition.

\begin{prop} The following two statements are equivalent
\begin{itemize}
\item[(A)] The divisor $P_1+\dots+P_{2g-2}$  coincides with the divisor of some holomorphic one-form $\omega.$
\item[(B)] ${\rm rank}\, {\cal B}<g.$
\end{itemize}
\end{prop}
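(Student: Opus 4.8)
The plan is to interpret the rank condition on $\mathcal{B}$ directly in terms of the Bergman kernel's reproducing property and the Riemann--Roch theorem. Recall that for holomorphic differentials $v=\sum_i \alpha_i v_i$ one has $v(P) = \sum_{i,j}\overline{\alpha_i}\cdot$ (pairing with $B$); more precisely, writing the matrix $\Lambda = (\Im\mathbb{B})^{-1}$, the $k$-th column of $\mathcal{B}$ is, in the chosen local parameters, the vector $\big(\sum_{i,j}\Lambda_{ij} v_i(P_\ell)\overline{v_j(P_k)}\big)_{\ell}$. So the column span of $\mathcal{B}$ is the image, under the evaluation map $\mathrm{ev}\colon v\mapsto (v(P_1),\dots,v(P_{2g-2}))$ (coordinates w.r.t.\ $\xi_\ell$), of the $g$-dimensional space $\Omega^1_{\mathrm{hol}}(X)$: indeed $\overline{v_j(P_k)}$ ranges over all of $\mathbb{C}^g$ as $k$ varies and $\Lambda$ is invertible, then $v_i \mapsto v_i(P_\ell)$. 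Hence
$$\operatorname{rank}\mathcal{B} = \dim \operatorname{ev}\big(\Omega^1_{\mathrm{hol}}(X)\big) = g - \dim\{\,\omega \in \Omega^1_{\mathrm{hol}}(X) : \omega(P_1)=\dots=\omega(P_{2g-2})=0\,\} = g - \dim H^0(X, K_X - D),$$
where $D = P_1+\dots+P_{2g-2}$ and $K_X$ is the canonical divisor (degree $2g-2$).

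The equivalence is now immediate from this formula together with Riemann--Roch. Statement (A) says that $K_X - D$ is linearly equivalent to an effective divisor, i.e.\ $h^0(X, K_X - D) \geq 1$. By the computation above this is exactly the statement $\operatorname{rank}\mathcal{B} \leq g-1$, i.e.\ (B). Conversely, since $\deg(K_X - D) = 0$, a degree-zero divisor class has $h^0 \geq 1$ iff it is trivial iff it equals the class of an effective divisor — and when $h^0(K_X-D)\geq 1$ there is a holomorphic one-form $\omega$ vanishing at all the $P_k$; counting zeros (a one-form has exactly $2g-2$ zeros with multiplicity) forces the divisor of $\omega$ to be exactly $D$. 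This gives (B) $\Rightarrow$ (A). One should also check independence of the local parameters: changing $\xi_k$ to $\tilde\xi_k$ multiplies the $k$-th row and $k$-th column of $\mathcal{B}$ by nonzero constants $\xi_k'(\tilde\xi_k=0)$ and its conjugate, which does not change the rank — this is the remark already made in the text.

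The step I expect to require the most care is the identification $\operatorname{rank}\mathcal{B} = g - h^0(K_X - D)$, specifically verifying that the evaluation map $\mathrm{ev}$ and the matrix $\mathcal{B}$ have the same kernel/rank despite $\mathcal{B}$ being a sesquilinear Gram-type matrix rather than $\mathrm{ev}$ itself. The clean way is: a vector $\beta = (\beta_1,\dots,\beta_{2g-2})^t$ lies in $\ker\mathcal{B}$ iff $\sum_k \beta_k \overline{v_j(P_k)} \in \ker$ of the map $\mathbb{C}^g \to \mathbb{C}^{2g-2}$, $\gamma \mapsto (\sum_{i,j}\Lambda_{ij}\gamma_j v_i(P_\ell))_\ell$; since $\Lambda$ is invertible and $\{v_i\}$ a basis, that kernel is $\{\gamma : \sum_j \gamma_j v_j \text{ vanishes at all } P_\ell\}$, whose dimension is $h^0(K_X-D)$. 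Taking conjugates and using that $\{\overline{v_j(P_k)}\}_{j,k}$ has full rank $g$, one gets $\dim\ker\mathcal{B} = h^0(K_X - D)$, hence $\operatorname{rank}\mathcal{B} = (2g-2) - h^0(K_X-D)$ — wait, one must be careful that $\mathcal{B}$ is $(2g-2)\times(2g-2)$ but factors through $\mathbb{C}^g$, so its rank is at most $g$ and equals $g - h^0(K_X-D)$; the apparent discrepancy is resolved because the sesquilinear form has a large automatic nullspace. Making this factorization $\mathcal{B} = V^* \Lambda V$ with $V$ the $g\times(2g-2)$ matrix $[v_i(P_k)]$ precise, and reading off $\operatorname{rank}\mathcal{B} = \operatorname{rank} V = g - h^0(K_X - D)$, is the technical heart of the argument; everything else is Riemann--Roch bookkeeping.
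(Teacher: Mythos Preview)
Your argument is correct. The factorization $\mathcal{B}=\overline{V}^{\,*}\Lambda\,\overline{V}$ with $V=[v_i(P_k)]\in M_{g\times(2g-2)}$ and $\Lambda=(\Im\mathbb{B})^{-1}$ positive definite gives $\operatorname{rank}\mathcal{B}=\operatorname{rank}V=g-h^0(K_X-D)$, and since $\deg(K_X-D)=0$ this immediately yields both implications; the zero-counting to upgrade ``$\omega$ vanishes at all $P_k$'' to ``$(\omega)=D$'' is the right final touch. Your exposition wobbles in the middle (the paragraph beginning ``wait'' momentarily confuses the $(2g-2)$-dimensional kernel of $\mathcal{B}$ with the $g$-dimensional kernel of the evaluation map), but the resolution you reach via the factorization is sound.

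The paper's proof rests on the same core observation---that $\mathcal{B}$ is the Gram matrix of the vectors $\mathbf{V}_k=(v_1(P_k),\dots,v_g(P_k))$ with respect to the Hermitian form defined by $\Lambda$---but handles the direction (A)$\Rightarrow$(B) differently. Instead of invoking the rank formula $\operatorname{rank}\mathcal{B}=g-h^0(K_X-D)$, it argues that if $D$ is canonical then every $g$-element subdivisor $P_{i_1}+\dots+P_{i_g}$ is special, hence every $g\times g$ minor of $V$ vanishes; then, were $\operatorname{rank}\mathcal{B}=g$, the Principal Minor Theorem for Hermitian matrices would produce a nonvanishing principal $g\times g$ minor of $\mathcal{B}$, a contradiction. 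Your route is more economical: it avoids the detour through special subdivisors and the Principal Minor Theorem, and it delivers the exact value of $\operatorname{rank}\mathcal{B}$ rather than just an upper bound. The paper's route, on the other hand, makes the connection with the classical notion of special divisors explicit. For (B)$\Rightarrow$(A) the two proofs are essentially identical.
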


Before proving this proposition, we recall the
\begin{definition}
On a Riemann surface $X$ of genus $g$, a divisor $Q_1+Q_2+\dots Q_g$ is called {\em
  special} when
\[
i(Q_1+\dots+Q_g):={\rm dim}\{\omega - {\rm holomorphic\  one\  form}:
\ (\omega)>Q_1+\dots +Q_g \ \  {\rm or}\ \ \omega=0\}>0\,.
\]
Equivalently, the divisor $Q_1+\dots Q_g$ is special iff
\begin{equation}\label{det0}{\rm det} \left [ v_j(Q_k)\right ]_{1\leq
    j,k\leq g}=0.\end{equation}
\end{definition}

\begin{proof}
\noindent ${\bf (A)\Rightarrow (B)}$ \\
Since the positive divisor $P_1+\dots +P_{2g-2}$ is in the canonical class, for any $i_1, \dots, i_g$; $1\leq i_1<i_2\dots<i_g\leq 2g-2$ the divisor $P_{i_1}+\dots+P_{i_g}$ is special.

The matrix ${\cal B}$ is the Gram matrix of the $2g-2$ vectors from ${\mathbb C}^{g}$:
$${\bf V_1}=(v_1(P_1), v_2(P_1), \dots, v_g(P_1))$$
$$...............................$$
$${\bf V_{2g-2}}=(v_1(P_{2g-2}), v_2(P_{2g-2}), \dots, v_g(P_{2g-2}))$$
with respect to the (nondegenerate) Hermitian product in ${\mathbb C}^g$
$$\langle{\bf V}, {\bf W}\rangle =\sum_{\alpha, \beta=1}^g\Im {\mathbb B}_{\alpha \beta}^{-1}V_\alpha\bar W_\beta\,.$$
Thus, ${\rm rank}\ {\cal B}\leq g$ and if ${\rm rank}\ {\cal B}= g$,
then, according to Principal Minor Theorem for Hermitian matrices
(see, e. g., \cite{Egan}), there exists a nonzero {\rm principal}
minor of ${\cal B}$ of order $g$. This contradicts (\ref{det0}).\hfill \\

\noindent{${\bf (B)\Rightarrow (A)}$:}
 We will use the following elementary fact from linear algebra:

Let $E$ be a vector space of dimension $g$ and let $E^*$ be its dual space. Let $k\geq g$ and $\{L_j\}_{j=1, \dots, k}$ be a collection of linear forms on $E$.  Then
$${\rm Span}\,_{j=1, \dots, k}L_j=E^* \Leftrightarrow \cap_{j=1}^k {\rm Ker}\,L_j=\{0\} \Leftrightarrow $$$$\exists i_1, \dots, i_g\,:\, L_{i_1}, \dots, L_{i_g} {\rm are \ \ linear\ \ independent}\,.$$

Let $E$ be the ($g$-dimensional) space of holomorphic one forms on $X$, and let $L_j(v)=v(P_j)$, where $v$ is a holomorphic one-form, and $v(P_j)$ is its value at $P_j$ with respect to the distinguished local parameter of ${\bf m}$ at $P_j$. If ${\rm rank}\, {\cal B}<g$ then no set of $g$ forms $L_j$ is independent and, therefore, the intersection of all ${\rm Ker}\, L_j$ is not trivial. Thus, there exists a nonzero holomorphic one form vanishing at all $P_j$.
\end{proof}

\begin{Remark}\footnote{We thank K. Shramov and N. Tyurin  for this remark}
The second part of the proof can be simplified when $X$ is
non-hyperelliptic by  using that, in this case $X\ni P\to (v_1(P), \dots, v_g(P))\in{\mathbb C}P^{g-1}$ is an embedding.
\end{Remark}

\begin{Remark}  The $S$-matrix knows whether the Troyanov metric has trivial holonomy or not.
\end{Remark}

\section{Dirichlet-to-Neumann isospectrality and Bergman kernel}

\subsection{Comparison formula for determinants and Bergman kernel}

The following Proposition is essentially a reformulation of Theorem 1
from \cite{HK} for the case of the holomorphic extension $\Delta_{\mathrm{hol}}$.
Since  this reformulation is not completely immediate, we give here some details on the proof.
\begin{prop}\label{DD} Let
$$T(\lambda):=S^{ha}(\lambda)=\left(
\begin{array}{cccc}
S^{\xi_1\bar \xi_1}(\lambda) & S^{\xi_1, \bar \xi_2}(\lambda) &\cdots &
 S^{\xi_1, \bar \xi_{2g-2}}(\lambda)\\
\vdots & \vdots & \cdots & \vdots \\
S^{\xi_{2g-2}\bar \xi_1}(\lambda) & S^{\xi_{2g-2}, \bar
  \xi_2}(\lambda) & \cdots & S^{\xi_{2g-2}, \bar \xi_{2g-2}}(\lambda)
\end{array}
\right)\,.$$
There exists a constant $C_g$ that depends only on the genus such that
the following comparison formula for the $\zeta$-regularized
determinants holds:
$${\rm det}(\Dhol-\lambda)=C_g\,{\rm detT} (\lambda){\rm det}(\Delta_F-\lambda)\,.$$
\end{prop}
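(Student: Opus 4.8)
The plan is to deduce Proposition~\ref{DD} from Theorem~1 of \cite{HK} by carefully matching the $S$-matrix that appears there (which compares the Friedrichs extension to a general self-adjoint extension described by a Lagrangian subspace) with the block $S^{ha}(\lambda)$ that is relevant for the holomorphic extension $\Delta_{\mathrm{hol}}$. Recall from Section~\ref{sec:extensions} that $\dom(\Delta_F)$ is cut out by $a_k=b_k=c_k=0$ while $\dom(\Delta_{\mathrm{hol}})$ is cut out by $a_k=c_k=f_k=0$; so passing from $\Delta_F$ to $\Delta_{\mathrm{hol}}$ only involves the holomorphic/antiholomorphic coordinates $b_k,e_k,c_k,f_k$ and never the logarithmic coordinates $a_k,d_k$. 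This is exactly why in the block decomposition of $S(\lambda)$ the relevant object is the regular block, and among the four regular blocks the one that enters the comparison is $S_{ha}$: the functions $G_{\xi_k}$ (which are the ``defect'' solutions one must add to pass to $\Delta_{\mathrm{hol}}$, since they have a pure $1/\xi_k$ singularity) are tested against the antiholomorphic functional to read off the obstruction to lying in the new Lagrangian.

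First I would recall the precise statement of \cite[Thm.~1]{HK}: for a self-adjoint extension $\Delta_L$ given by a Lagrangian $L$, one has a comparison formula of the shape $\det(\Delta_L-\lambda)=\mathrm{const}\cdot\det M_L(\lambda)\cdot\det(\Delta_F-\lambda)$, where $M_L(\lambda)$ is built from the $S$-matrix by restricting/projecting according to $L$. Concretely, $M_L(\lambda)$ records the map that sends the ``incoming'' data (a choice of singular solution with prescribed singular coefficients in the directions transverse to $\dom\Delta_F$) to the coefficients that must vanish in order for that solution to lie in $\dom\Delta_L$. For $L=\dom(\Delta_{\mathrm{hol}})$ the transverse singular directions are spanned by the $G_{\xi_k}$ (the $1/\xi_k$ solutions), and the vanishing condition selecting $\dom(\Delta_{\mathrm{hol}})$ among these is $f_k=0$, i.e. the coefficient of $\bar\xi_k$. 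By the self-explanatory notation introduced before Proposition~\ref{scatt}, the coefficient of $\bar\xi_j$ in $G_{\xi_k}$ is precisely $S^{\xi_k\bar\xi_j}(\lambda)$, so $M_L(\lambda)$ is exactly the matrix $T(\lambda)=S^{ha}(\lambda)$. One then has to check that the logarithmic coordinates $a_k,d_k$ play no role: since both Lagrangians impose $a_k=0$, the corresponding rows/columns of the general $S$-matrix formula drop out cleanly, and similarly $b_k,e_k$ are unconstrained in $\dom(\Delta_{\mathrm{hol}})$ so they do not contribute to the determinant either. This bookkeeping — verifying that the general $(6g-6)\times(6g-6)$ construction of \cite{HK} collapses to the $(2g-2)\times(2g-2)$ block $S^{ha}$ in this case — is the substantive content.

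The steps, in order, are: (1) write down $\dom(\Delta_F)$ and $\dom(\Delta_{\mathrm{hol}})$ as Lagrangians in the symplectic space $\dom(\Delta^*)/\dom(\Delta)$ using the symplectic form~(\ref{symp}), and identify the relative position of these two Lagrangians — they agree in the $a$-coordinate (both kill $a_k$), are ``transverse'' in the $(b,f)$ pair, and $\Delta_{\mathrm{hol}}$ replaces the condition $b_k=0$ by $f_k=0$; (2) quote \cite[Thm.~1]{HK} and specialize its general matrix $M_L(\lambda)$ to this relative position, observing that only the $(b\leftrightarrow f)$ block survives and that this block is by definition the matrix of coefficients of $\bar\xi_j$ in $G_{\xi_k}$, hence equals $T(\lambda)$; (3) collect the leftover numerical factors from \cite{HK} (normalizations of $G_{\xi_k}$, the factors $\tfrac{1}{2\sqrt\pi}$, etc.) into a single genus-dependent constant $C_g$, noting that they are $\lambda$-independent because the singular coefficients' normalizations are; (4) conclude $\det(\Delta_{\mathrm{hol}}-\lambda)=C_g\det T(\lambda)\det(\Delta_F-\lambda)$.

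The main obstacle I anticipate is step~(2): the construction in \cite{HK} is written in full generality with a complicated $S$-matrix and an extension-dependent combination of its blocks, and one must genuinely trust (or re-derive) that, for a Lagrangian differing from the Friedrichs one only in the holomorphic/antiholomorphic sector and not in the logarithmic sector, the relevant determinant factor is exactly the single off-diagonal block $S^{ha}(\lambda)$ with no admixture from $S_{aa}$, $S_{hh}$, or the (singular) $S_{00},S_{0a},\dots$ blocks. Making this reduction airtight — essentially a linear-algebra lemma about how the ``characteristic'' matrix of \cite{HK} factors when two Lagrangians are in a special relative position, combined with the fact that $\lambda=0$ (and more generally $\lambda\notin\mathrm{spec}\,\Delta_F$) keeps the $G_{\xi_k}$ well-defined — is where the ``not completely immediate'' part of the reformulation lies, and is what the detailed proof needs to spell out.
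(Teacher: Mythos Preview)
Your approach is sound but differs from the paper's. Rather than invoking \cite[Thm.~1]{HK} as a black box and then doing the linear-algebra reduction to show that the general characteristic matrix $M_L(\lambda)$ collapses to the single block $T(\lambda)=S^{ha}(\lambda)$, the paper instead re-derives the key intermediate step (a specialization of \cite[Prop.~5.3]{HK}) directly for $\Delta_{\mathrm{hol}}$: it writes the Krein resolvent formula $(\Delta_{\mathrm{hol}}-\lambda)^{-1}f=(\Delta_F-\lambda)^{-1}f+\sum_k x_k G_{\xi_k}(\cdot,\lambda)$, imposes the condition $f_j=0$ (vanishing of the $\bar\xi_j$-coefficient) that cuts out $\dom(\Delta_{\mathrm{hol}})$, solves $x_k=-\sum_j\Lambda_{\bar\xi_j}\big((\Delta_F-\lambda)^{-1}f\big)T^{-1}_{jk}$, and then takes the trace to obtain the identity
\[
\Tr\big((\Delta_{\mathrm{hol}}-\lambda)^{-1}-(\Delta_F-\lambda)^{-1}\big)=-\Tr\big(T(\lambda)^{-1}\partial_\lambda T(\lambda)\big).
\]
Only the final passage from this trace identity to the determinant comparison is imported verbatim from the contour-integration argument in \cite{HK}. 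Your route has the virtue of economy---you never redo the Krein computation---but it leaves the ``not completely immediate'' reduction you flag in step~(2) as a genuine piece of bookkeeping against the full $(6g-6)\times(6g-6)$ structure of \cite{HK}; the paper's route trades that for a short, self-contained resolvent calculation in which the appearance of exactly $S^{ha}$ (and nothing else) is manifest from the outset.
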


To get Proposition \ref{DD} one needs the following reformulation of Proposition 5.3 from \cite{HK}
for the case of the holomorphic extension.
\begin{lemma} The following relation holds true
\begin{equation}\label{newver}
{\rm Tr}((\Dhol-\lambda)^{-1}-(\Delta_F-\lambda)^{-1})=-{\rm Tr}\left(T(\lambda)^{-1}\frac{d}{d\lambda}T(\lambda)
\right)\,.\end{equation}
\end{lemma}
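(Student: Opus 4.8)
The plan is to derive the trace formula \eqref{newver} by differentiating the resolvent identity in $\lambda$ and relating the Hilbert--Schmidt operators that appear to the matrix $T(\lambda)$ through the asymptotic functionals $\Lambda$. First I would recall the Krein-type formula expressing $(\Dhol-\lambda)^{-1}-(\Delta_F-\lambda)^{-1}$ as a finite-rank operator built from the functions $G_{\xi_k}(\cdot,\lambda)$, $k=1,\dots,2g-2$. Concretely, since $\dom(\Delta_{\mathrm{hol}})$ is the Lagrangian subspace $a_k=c_k=f_k=0$, a function $u\in\dom(\Dhol)$ differs from a function in $\dom(\Delta_F)$ precisely by a combination $\sum_k b_k(u)\,2\sqrt{\pi}\,G_{\xi_k}$ (the holomorphic singular parts), and the constraint $f_k(u)=0$ says that the antiholomorphic coefficients of the resulting function vanish. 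Writing out this constraint using the expansion of $G_{\xi_k}$ near each $P_j$ produces exactly the matrix $T(\lambda)=S^{ha}(\lambda)$ acting on the vector of coefficients $(b_k)$. From this one reads off that, for $\lambda,\mu$ outside both spectra,
\[
(\Dhol-\lambda)^{-1} - (\Delta_F-\lambda)^{-1} \,=\, -\sum_{j,k} \bigl(T(\lambda)^{-1}\bigr)_{kj}\, \langle\,\cdot\,, \overline{G_{\bar\xi_j}(\cdot,\lambda)}\rangle\, G_{\xi_k}(\cdot,\lambda),
\]
or a close variant thereof; the precise pairing (whether $G_{\bar\xi_j}$ or $\overline{G_{\xi_j}}$ enters) is dictated by the symplectic form $\Omega$ in \eqref{symp} and will need to be pinned down carefully.

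Next I would take the trace of both sides. The left side is a difference of resolvents; the standard move is to use $\partial_\lambda(\Delta-\lambda)^{-1}=(\Delta-\lambda)^{-2}$, so that
\[
\Tr\bigl((\Dhol-\lambda)^{-1}-(\Delta_F-\lambda)^{-1}\bigr)
\]
is (up to integrating in $\lambda$) controlled by traces of the squared-resolvent differences, but it is cleaner to compute the trace of the rank-$(2g-2)$ operator on the right directly: $\Tr\bigl(\langle\,\cdot\,,\psi_j\rangle\phi_k\bigr)=\langle\phi_k,\psi_j\rangle$. Thus the right-hand trace becomes $-\sum_{j,k}(T(\lambda)^{-1})_{kj}\,\langle G_{\xi_k}, \overline{G_{\bar\xi_j}}\rangle$. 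The key identity to establish is then
\[
\langle G_{\xi_k}(\cdot,\lambda), \overline{G_{\bar\xi_j}(\cdot,\lambda)}\rangle \,=\, \partial_\lambda S^{\xi_j\bar\xi_k}(\lambda) \;=\; \bigl(\partial_\lambda T(\lambda)\bigr)_{jk},
\]
which matches $-\Tr\bigl(T^{-1}\,dT/d\lambda\bigr)$ after summation. This pairing-equals-derivative-of-scattering-coefficient identity is the analogue of Proposition 5.3 of \cite{HK}, and I would prove it by the Green's formula computation: apply \eqref{symp} to $u=\partial_\lambda G_{\xi_k}$ and $v=G_{\bar\xi_j}$ (both in $H^2_F(X)$ up to explicit singular terms), using $(\Delta_F-\lambda)\partial_\lambda G_{\xi_k}=G_{\xi_k}$ and $(\Delta_F-\lambda)G_{\bar\xi_j}=\Lambda_{\bar\xi_j}$, so that the bulk term gives $\langle G_{\xi_k},\overline{G_{\bar\xi_j}}\rangle$ while the boundary term collects the coefficients $X_k(\partial_\lambda G_{\xi_k})$ paired against $X_j(G_{\bar\xi_j})$, i.e. $\partial_\lambda S^{\cdot\cdot}$.

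The main obstacle I anticipate is bookkeeping: getting all the normalization constants $\tfrac{1}{2\sqrt\pi}$, $\tfrac{i}{\sqrt{2\pi}}$ and the signs from the symplectic block $\left(\begin{smallmatrix}0&-I_3\\I_3&0\end{smallmatrix}\right)$ consistent, so that the Green's-formula boundary term produces $\partial_\lambda T$ with the correct transpose/conjugation and no stray factor. A secondary subtlety is justifying the convergence: the operators involved are Hilbert--Schmidt (by \eqref{eq:expGk}--\eqref{eq:expdlG_k}, the relevant sequences are in $\ell^2$), but taking a trace requires trace-class, which holds for the difference of resolvents and for the finite-rank right-hand side, so one should phrase the computation so that only legitimately trace-class operators are ever traced — most safely by first writing everything in the orthonormal basis $(\phi_\ell)$ via \eqref{eq:expGk} and \eqref{eq:expdlG_k}, whereupon $\langle G_{\xi_k},\overline{G_{\bar\xi_j}}\rangle=\sum_\ell \tfrac{\Lambda_{\xi_k}(\phi_\ell)\Lambda_{\bar\xi_j}(\phi_\ell)}{(\lambda_\ell-\lambda)^2}$ is manifestly $\partial_\lambda$ of the corresponding $S$-entry. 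Once \eqref{newver} is in hand, Proposition \ref{DD} follows by the usual argument: $\partial_\lambda\log\det(\Delta-\lambda)=-\Tr(\Delta-\lambda)^{-1}$ in the regularized sense, so \eqref{newver} integrates to $\log\det(\Dhol-\lambda)-\log\det(\Delta_F-\lambda)=\log\det T(\lambda)+\text{const}$, with the constant depending only on the genus since it is the same for all such surfaces (it can be evaluated by an asymptotic analysis as $\lambda\to-\infty$, which I would only sketch).
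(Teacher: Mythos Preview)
Your proposal is correct and follows essentially the same route as the paper: write the resolvent difference via Krein's formula as a rank-$(2g-2)$ operator in the $G_{\xi_k}$, determine the coefficients by imposing the vanishing of the $\bar\xi_j$-coefficients (which produces $T(\lambda)^{-1}$), take the trace, and identify the resulting pairings with $\partial_\lambda T$ using the eigenfunction expansions \eqref{eq:expGk}--\eqref{eq:expdlG_k}. The paper does exactly this, going directly through the orthonormal basis $(\phi_\ell)$ rather than via the Green's-formula/symplectic-form computation you also sketch; your remark that the basis expansion is the ``safest'' route is precisely the choice the paper makes, and your anticipated bookkeeping issues with normalizations and transposes are the only places where care is needed.
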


\begin{proof} As in \cite{HK} the key idea is to make use of the M. G. Krein theory for the difference of the resolvents
of two self-adjoint extensions of a symmetric operator with finite
deficiency indices. Using this theory, for any $f\in L^2(X)$ there
exists a collection $(x_k)_{k=1,\dots,2g-2}$ such that
\begin{equation}\label{AA}
(\Dhol-\lambda)^{-1}f=(\Delta_F-\lambda)^{-1}f\,+\, \sum_{k=1}^{2g-2}
x_k G_{\xi_k}(\cdot, \lambda)
\end{equation}
For each $j,$ considering the coefficient of $\bar{\xi}_j$ leads to
the following equation :
\[
\forall j, ~ 0\,=\, \Lambda_{\bar
  \xi_j}\left((\Delta_F-\lambda)^{-1}f\right)\,+\,\sum_{k=1}^{2g-2}
x_k S^{\xi_k \bar{\xi}_j}.
\]
Since $S^{\xi_k \bar{\xi}_j}= T_{kj},$ by inverting this relation we obtain that
\[
\forall k,~ x_k \,=\,-\sum_{j=1}^{2g-2}  \Lambda_{\bar
  \xi_j}\left((\Delta_F-\lambda)^{-1}f\right) T^{-1}_{jk}
\]

It follows that by taking the trace we obtain
\begin{equation}
\begin{split}
{\rm Tr}((\Dhol-\lambda)^{-1}-(\Delta_F-\lambda)^{-1})& \,=\,-\sum_{j,k=1}^{2g-2}\sum_{\ell
  \geq 0} \Lambda_{\bar
  \xi_j}\left((\Delta_F-\lambda)^{-1} \phi_\ell \right)
T^{-1}_{jk}\langle \phi_\ell, G_{\xi_k}\rangle \\
&=\,-\sum_{j,k=1}^{2g-2} T^{-1}_{jk} \sum_{\ell \geq 0} (\lambda_\ell
-\lambda)^{-2} \Lambda_{\bar
  \xi_j}(\phi_\ell)\Lambda_{\xi_k}(\phi_\ell)
\end{split}
\end{equation}
Using equation \eqref{eq:expdlG_k}, we get
\[
\sum_{\ell \geq 0} (\lambda_\ell
-\lambda)^{-2} \Lambda_{\bar
  \xi_j}(\phi_\ell)\Lambda_{\xi_k}(\phi_\ell) \,=\, \Lambda_{\bar
  \xi_j}\left( \partial_\lambda G_{\xi_k}\right)
\,=\, \partial_{\lambda} S^{\xi_k \bar \xi_j}\,=\, \partial_\lambda T_{kj}.
\]
The Lemma thus follows.
\end{proof}

This lemma implies Proposition \ref{DD} by performing exactly the same
contour integration as in \cite{HK} (from that point, the remaining
part of the proof can be repeated verbatim).

\subsection{DtN isospectrality of $\Delta_F$ and $\Delta_{\mathrm{hol}}$}

Let the divisor $P_1+\dots+P_{2g-2}$ be in the canonical class. Then the corresponding Troyanov metric (all the conical angles are  equal to $4\pi$)
has trivial holonomy and is given by $|\omega|^2$, where $\omega$ is a holomorphic one-form with simple zeros at $P_1, \dots, P_{2g-2}$.
Let $z(P)=\int_{P_0}^P \omega$ with some base point $P_0$. Then $z(P)$ can be taken as holomorphic local parameter on $X$ in a vicinity
 of any point of $X\setminus\{P_1, \dots, P_{2g-2}\}$.
 Consider the operator
 $$D_{z}: L^2(X, |\omega|^2)\supset C^\infty_0(X\setminus\{P_1, \dots, P_{2g-2}\}\ni u\rightarrow u_z\in L^2(X, |\omega|^2)$$

\begin{Remark} Let $\partial_z$ be the standard Cauchy-Riemann operator $\partial_z: C^\infty(X)\to \Lambda^{1, 0}(X)$.
Then $D_z=\frac{1}{\omega}\partial_z$.
\end{Remark}

The operator $D_{z}$ is closable. Denote its closure again by $D_z$. The following observation belongs to the first author
\cite{H}.
\begin{prop}\label{iso} One has the following identifications of the self-adjoint operators
$$D_z^*D_z=\frac{1}{4}\Delta_F\,,$$
$$D_zD_z^*=\frac{1}{4}\Dhol\,.$$
\end{prop}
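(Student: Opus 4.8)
The plan is to verify the two operator identities by computing $D_z^*$ on the relevant domains and matching the resulting self-adjoint operators against the Lagrangian subspaces that define $\Delta_F$ and $\Delta_{\mathrm{hol}}$ in Section \ref{sec:extensions}. First I would record that in the local parameter $z$ the metric is flat Euclidean, $D_z=\partial_z$ acting on functions, and $\Delta^{\bf m}=4\partial_z\partial_{\bar z}=4 D_z^* D_z$ holds in the classical sense away from the conical points; so the only real content is identifying the correct self-adjoint realizations, i.e. the domains. Since $z=\int_{P_0}^P\omega$ and $\omega$ vanishes simply at $P_k$, near $P_k$ one has $z-z(P_k)\sim \text{const}\cdot\xi_k^2$, so the distinguished parameter $\xi_k$ is (up to constant) a square root of $z-z(P_k)$; this is the dictionary that will translate ``holomorphic in $z$'' into the asymptotic conditions on $a_k,\dots,f_k$.

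Next I would compute the adjoint $D_z^*$. Formally $D_z^*=-\partial_{\bar z}$ (as a map back to functions, using the metric $|\omega|^2$), but the point is to determine $\dom(D_z^*)$ via the integration-by-parts identity $\langle D_z u,v\rangle-\langle u,D_z^* v\rangle=$ boundary terms at the $P_k$. Because $D_z$ is a first-order operator, its minimal domain is the closure of $C_0^\infty(X\setminus\{P_k\})$, and $\dom(D_z^*)$ consists of all $L^2$ functions $v$ with $\partial_{\bar z}v\in L^2$ and no leftover boundary contribution; concretely, near each $P_k$ this forces $v$ to have an expansion with no $\bar\xi_k^{-1}$-type singular term and no $\log$ term — i.e. in the notation of the excerpt, the constraint defining $\dom(D_z^*)$ picks out exactly the ``holomorphic-side'' coefficients. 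Then I would form the two self-adjoint products: $\dom(D_z^*D_z)$ consists of $u\in\dom(D_z)$ with $D_z u\in\dom(D_z^*)$, and dually for $D_zD_z^*$. Tracking the allowed singular/constant/linear terms of $u$ near $P_k$ through ``apply $\partial_z$, require the result to lie in $\dom(D_z^*)$'' should reproduce precisely the Lagrangian $a_k=b_k=c_k=0$ for $D_z^*D_z$ (bounded, both holomorphic and antiholomorphic linear terms allowed) and $a_k=c_k=f_k=0$ for $D_zD_z^*$ (possibly unbounded, only holomorphic terms); matching these against Section \ref{sec:extensions} then gives $D_z^*D_z=\tfrac14\Delta_F$ and $D_zD_z^*=\tfrac14\Delta_{\mathrm{hol}}$. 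That both products are indeed self-adjoint (not merely symmetric) is automatic from von Neumann's theorem $D_z^*D_z$, $D_zD_z^*$ self-adjoint for any densely defined closed $D_z$.

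The main obstacle I expect is the bookkeeping of the asymptotic expansions under the change of variable $z\leftrightarrow\xi_k$: one must carefully check that $u=\psi/\omega$-type behavior (a simple pole in $\xi_k$ paired with the vanishing of $\omega$) corresponds correctly to an allowed element of $\dom(D_z)$ whose image under $\partial_z$ still lies in $L^2$ and in $\dom(D_z^*)$, and conversely that no spurious conditions are introduced. In particular I would need to be careful that $D_z$ really is \emph{closed} with the minimal domain described (so that $D_z^{**}=D_z$ and von Neumann applies), and that the metric weight $|\omega|^2$ — which is what makes the two $L^2$ spaces in the definition of $D_z$ literally the same Hilbert space — does not alter the formal adjoint. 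Once the domains are pinned down, the factor $\tfrac14$ and the identities drop out of $\Delta^{\bf m}=4\partial_z\partial_{\bar z}$; this last step is routine.
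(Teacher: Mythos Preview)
Your plan is correct and would work; the paper's proof follows the same conceptual route (von Neumann gives self-adjointness of $D_z^*D_z$ and $D_zD_z^*$, then identify domains) but exploits a shortcut you do not mention. Namely, once one knows that both sides of each identity are self-adjoint extensions of the minimal Laplacian, equality follows from a \emph{single} domain inclusion, because a self-adjoint operator admits no proper self-adjoint extension. Thus the paper argues: for the first identity, $\dom(D_z^*D_z)\subset\dom(D_z)$ trivially, and elements of the minimal domain $\dom(D_z)$ are bounded near the $P_k$, so $\dom(D_z^*D_z)\subset\dom(\Delta_F)$; for the second, any $u\in\dom(\Delta^*)$ with purely holomorphic asymptotics is seen to lie in $\dom(D_z^*)$ with $D_z^*u\in\dom(D_z)$, so $\dom(\Delta_{\mathrm{hol}})\subset\dom(D_zD_z^*)$. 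This sidesteps the full two-sided asymptotic bookkeeping under $z\leftrightarrow\xi_k$ that you propose. Your more explicit computation buys a self-contained verification of the Lagrangian conditions and would also go through; the paper's version is shorter but relies on the reader supplying the maximality argument.
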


\begin{proof}
By general considerations, both operators are self-adjoint extensions
of the Laplace operator on functions that vanish near the conical
points. It thus suffices to prove that the domains coincide. For the
first operator, it follows by remarking that  $\dom(D_z^*D_z)\subset
\dom(D_z).$
For the second operator it follows by remarking that any function in
$\dom(\Delta^*)$ with holomorphic behaviour near the conical points is
in $\dom(D_zD_z^*).$
\end{proof}

From Proposition \ref{iso} it immediately follows that the operators $\Delta_F$ and $\Delta_N$ are DtN isospectral, that is
$${\rm dim\,}{\rm Ker\,}(\Delta_F-\lambda)= {\rm dim\,}{\rm Ker\,}(\Dhol-\lambda)$$
for any $\lambda\neq 0$.

To compare the spectral determinants of both extensions, it thus
suffices to compute the dimension of their kernels. The kernel of
$\Delta_F$ only consists of the constant functions.

\begin{lem} For metrics with trivial holonomy one has the equality
$${\rm dim}\,{\rm Ker}\, \Dhol=g\,.$$
\end{lem}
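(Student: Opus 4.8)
The plan is to use the factorization $\Dhol = 4\,D_z D_z^*$ from Proposition \ref{iso}, which gives $\operatorname{Ker}\Dhol = \operatorname{Ker} D_z^*$. So the task reduces to computing the dimension of the kernel of the adjoint operator $D_z^*$. First I would identify $D_z^*$ concretely: since $D_z = \frac{1}{\omega}\partial_z$ maps functions to functions via $u\mapsto u_z$, a computation with the Green formula for the pairing on $L^2(X,|\omega|^2)$ shows that $D_z^*$ acts (on its domain) as $v \mapsto -\frac{1}{\overline{\omega}}\partial_{\bar z} v$, i.e. up to a conjugation and a nonvanishing factor it is the Cauchy--Riemann operator $\partial_{\bar z}$. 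Hence $v\in\operatorname{Ker} D_z^*$ forces $\partial_{\bar z} v = 0$ away from the conical points, so $v$ is holomorphic on $X\setminus\{P_1,\dots,P_{2g-2}\}$, and it lies in $L^2(X,|\omega|^2)$.

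The next step is to classify such $v$. Writing $\psi := v\,\omega$, the one-form $\psi$ is holomorphic on $X\setminus\{P_1,\dots,P_{2g-2}\}$, and the $L^2$ condition together with the holomorphic asymptotic behaviour allowed in $\dom(\Delta^*)$ (only terms $1/\xi_k$, constant, and higher — no worse than a simple pole in the local parameter $\xi_k$, equivalently at worst the pole permitted by membership in the domain) translates into a bound on the order of poles of $\psi$ at the $P_k$. Since $\omega$ has a simple zero at each $P_k$ and the distinguished parameter $\xi_k$ satisfies $|\omega|^2 = 4|\xi_k|^2|d\xi_k|^2$, one checks that $v = \psi/\omega$ having at worst a simple pole in $\xi_k$ is equivalent to $\psi$ being a genuine holomorphic one-form on all of $X$ (the simple zero of $\omega$ exactly absorbs one order). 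Therefore $\operatorname{Ker} D_z^*$ is in bijection with the space $H^0(X,\Omega^1)$ of global holomorphic one-forms, whose dimension is $g$. One should double-check the $L^2$-integrability at the conical points for every such $\psi$: near $P_k$, $v=\psi/\omega$ behaves like $\mathrm{const}/\xi_k + O(1)$ and $\int |\xi_k|^{-2}\,|\xi_k|^2\,|d\xi_k|^2 < \infty$, so integrability holds.

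Combining, $\dim\operatorname{Ker}\Dhol = \dim\operatorname{Ker} D_z^* = \dim H^0(X,\Omega^1) = g$, which is the claim. Note this is consistent with the remark in the introduction: the kernel of $\Delta_F$ is one-dimensional (constants, i.e. $\psi = \omega$ up to scale in the above correspondence — the one form proportional to $\omega$ gives the constant function), while $\Dhol$ picks up the full $g$-dimensional space, so the dimension jumps by $g-1$.

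The main obstacle I anticipate is the careful bookkeeping at the conical points: one must pin down precisely which singular behaviours of $v$ are admissible for membership in $\dom(\Dhol) = \dom(D_z D_z^*)$ (equivalently in $\operatorname{Ker} D_z^* \subset \dom(D_z^*)$), and verify that the map $v\mapsto v\omega$ sends exactly this space onto $H^0(X,\Omega^1)$ — neither losing forms (every global holomorphic one-form $\psi$ yields an admissible $v$) nor gaining spurious ones (an admissible $v$ cannot produce a $\psi$ with an actual pole). This is where the identity $|\omega|^2 = 4|\xi_k|^2|d\xi_k|^2$, hence $\xi_k^2 \sim (\text{local primitive of }\omega)$ and the simple zero of $\omega$, must be used to match orders exactly; everything else is soft.
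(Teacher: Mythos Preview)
Your proof is correct and follows essentially the same route as the paper: both show that elements of the kernel satisfy $\partial_{\bar z}u=0$ away from the conical points and then identify the kernel with $H^0(X,\Omega^1)$ via $u\mapsto u\,\omega$. The only cosmetic difference is that you invoke the factorization $\Dhol=4D_zD_z^*$ from Proposition~\ref{iso} to pass to $\operatorname{Ker}D_z^*$, whereas the paper repeats the underlying integration-by-parts computation directly on $\dom(\Dhol)$ to reach the same conclusion.
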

\begin{proof}
Denote by $X_\epsilon$ the surface $X$ where $\epsilon$-disks around
$P_k$ have been deleted. Then, for any $u\in \dom(\Dhol)$ we have .
\begin{equation*}
\begin{split}
-\frac{1}{4}\langle u, \Delta u\rangle & =\lim_{\epsilon\to 0}\int_{X_\epsilon}\bar
u \partial_z\partial_{\bar z}u|dz|^2\\
&= \langle\partial_{\bar z}u, \partial_{\bar z}u\rangle \\
& \,+\,\lim_{\epsilon \to 0}\sum_{k=1}^{2g-2}\oint_{|\xi_k|=\sqrt{\epsilon}}
(A_k/\xi_k+\dots)\frac{1}{\bar \xi_k}\partial_{\bar
  \xi_k}(A_k/\xi_k+B+C\xi_k+O(|\xi_k|^2))\bar\xi_kd\bar\xi_k\\
& = \langle \partial_{\bar z}u, \partial_{\bar z}u\rangle.
\end{split}
\end{equation*}
It follows that if $u\in {\rm Ker}\, \Delta_{\mathrm{hol}}$ then
$\partial_{\bar z}u=0.$ Thus, the one form $u\omega$ is holomorphic
and, therefore,
$u\in {\rm Span}\{v_1/\omega, \dots, v_g/\omega\}$.
Conversely, for any holomorphic one-form $v$ the meromorphic function
$v/\omega$ is seen to be in $\dom(\Dhol)$ and satisfies $\Delta^*
v/\omega =0.$ We obtain
\[
\ker(\Dhol) \,=\,{\rm Span}\{v_1/\omega, \dots, v_g/\omega\}.
\]
\end{proof}

\begin{Remark}
The needed equality also follows form the DtN isospectrality of $\Delta_{hol}$ and $\Delta_F$ and Remark
5.11 from \cite{HK}. Indeed, the latter implies the relation
$$\zeta(0, \Delta_{hol}-\lambda)=\zeta(0, \Delta_F-\lambda)+g-1$$
between the values of the operator zeta-functions of $\Delta_{\mathrm{hol}}-\lambda$ and $\Delta_F-\lambda$ at $s=0$.
\end{Remark}

The DtN isospectrality of $\Dhol$ and $\Delta_F$ (see \cite{H},
Theorem 4.8) then implies

$${\rm det}(\Dhol-\lambda)=\lambda^{g-1}{\rm det}(\Delta_F-\lambda)$$

and, therefore,

\begin{equation}\frac{{\rm det} \,T(\lambda)}{\lambda^{g-1}}=C_g\end{equation}

and

\begin{equation}\label{IDENTITY}
C_g=\frac{1}{(g-1)!}\left(\frac{d}{d\lambda}\right)^{g-1}{\rm det}\,T(\lambda)\Big|_{\lambda=0}\,,
\end{equation}

For any integer $M,$ denote by $\Ncal_{M}$ the set of $(2g-2)$-tuples
$\mathbf{n}:=(n_1,\cdots,n_{2g-2})$ of non-negative integers such that
$n_1+\cdots +n_{2g-2}=M.$
The preceding identity can thus be reformulated as
\begin{equation}\label{result}
C_g\,=\,\sum_{{\mathbf n}\in \Ncal(g-1)}\frac{1}{n_1!\dots n_{2g-2}!}
\left|
\begin{array}{ccc}
\left(\frac{d}{d\lambda}\right)^{n_1}S^{\xi_1 \bar \xi_1}(0) & \cdots & \left(\frac{d}{d\lambda}\right)^{n_{2g-2}}S^{\xi_1 \bar \xi_{2g-2}}(0)\\
\vdots & \cdots & \vdots \\

\left(\frac{d}{d\lambda}\right)^{n_1}S^{\xi_{2g-2} \bar \xi_1}(0) & \cdots & \left(\frac{d}{d\lambda}\right)^{n_{2g-2}}S^{\xi_{2g-2} \bar \xi_{2g-2}}(0)
\end{array}
\right|\,
\end{equation}
and $C_g$ is an absolute constants that depends on the genus only.

We now give alternative expressions for the coefficients in the
preceding determinant.

\begin{lemma}\label{lem} For all $n\geq 1$ and all $j,k,$ the following generalization
  of the formula (4.2) from \cite{HKK2014}
holds:
\begin{equation}
(\partial_\lambda)^nS^{\xi_k \bar
    \xi_j}(\lambda)=n!\int_X[(\Delta_F-\lambda)^{1-n}G_{\xi_k}]G_{\bar
    \xi_j} dS.
\end{equation}
\end{lemma}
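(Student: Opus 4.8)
The plan is to establish the identity $(\partial_\lambda)^n S^{\xi_k\bar\xi_j}(\lambda)=n!\int_X[(\Delta_F-\lambda)^{1-n}G_{\xi_k}]G_{\bar\xi_j}\,dS$ by an induction on $n$, using the spectral representation of the kernels $G_{\xi_k}$ together with the interpretation of $S^{\xi_k\bar\xi_j}$ as a linear functional. The base case $n=1$ is essentially the computation already carried out inside the proof of the preceding Lemma: from \eqref{eq:expdlG_k} one has $\partial_\lambda G_{\xi_k}=\sum_\ell \frac{\Lambda_{\xi_k}(\phi_\ell)}{(\lambda_\ell-\lambda)^2}\phi_\ell$, and applying the functional $\Lambda_{\bar\xi_j}$ gives $\partial_\lambda S^{\xi_k\bar\xi_j}=\sum_\ell \frac{\Lambda_{\xi_k}(\phi_\ell)\Lambda_{\bar\xi_j}(\phi_\ell)}{(\lambda_\ell-\lambda)^2}$; since $G_{\bar\xi_j}=\sum_\ell\frac{\Lambda_{\bar\xi_j}(\phi_\ell)}{\lambda_\ell-\lambda}\phi_\ell$ by \eqref{eq:expGk}, this last sum is exactly $\langle G_{\xi_k},G_{\bar\xi_j}\rangle$ read off as $\int_X G_{\xi_k}G_{\bar\xi_j}\,dS$ (here one must be a little careful about conjugations: the functionals are evaluated on the real basis $\phi_\ell$, so the pairing is bilinear, not Hermitian, which is why the statement has no complex conjugate). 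This settles $n=1$.

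For the inductive step I would differentiate once more. Assume the formula for $n$; then
\[
(\partial_\lambda)^{n+1}S^{\xi_k\bar\xi_j}=n!\,\partial_\lambda\!\int_X[(\Delta_F-\lambda)^{1-n}G_{\xi_k}]\,G_{\bar\xi_j}\,dS.
\]
Expand both kernels spectrally: $(\Delta_F-\lambda)^{1-n}G_{\xi_k}=\sum_\ell \frac{\Lambda_{\xi_k}(\phi_\ell)}{(\lambda_\ell-\lambda)^{n}}\phi_\ell$ and $G_{\bar\xi_j}=\sum_\ell \frac{\Lambda_{\bar\xi_j}(\phi_\ell)}{\lambda_\ell-\lambda}\phi_\ell$, so the integral equals $\sum_\ell \frac{\Lambda_{\xi_k}(\phi_\ell)\Lambda_{\bar\xi_j}(\phi_\ell)}{(\lambda_\ell-\lambda)^{n+1}}$. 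Differentiating in $\lambda$ multiplies the $\ell$-th term by $(n+1)/(\lambda_\ell-\lambda)$, giving $(n+1)!\sum_\ell\frac{\Lambda_{\xi_k}(\phi_\ell)\Lambda_{\bar\xi_j}(\phi_\ell)}{(\lambda_\ell-\lambda)^{n+2}}$, and the same resummation identifies this with $(n+1)!\int_X[(\Delta_F-\lambda)^{-n}G_{\xi_k}]G_{\bar\xi_j}\,dS$, which is the claimed formula at level $n+1$. The extra factor of $n+1$ is exactly what upgrades $n!$ to $(n+1)!$, matching the combinatorial prefactor.

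The main technical obstacle is justifying the term-by-term differentiation under the integral sign and the convergence of the rearranged series. For $n\geq 1$ the sequence $\big(\Lambda_{\xi_k}(\phi_\ell)/(\lambda_\ell-\lambda)\big)_\ell$ lies in $\ell^2$ (this is stated in the excerpt for $\Lambda_{\xi_k}$, and the same holds for $\Lambda_{\bar\xi_j}$), and each further division by $(\lambda_\ell-\lambda)\to-\infty$ only improves decay, so all the series $\sum_\ell (\lambda_\ell-\lambda)^{-m}\Lambda_{\xi_k}(\phi_\ell)\Lambda_{\bar\xi_j}(\phi_\ell)$ with $m\geq 2$ converge absolutely, locally uniformly in $\lambda$ away from $\spec(\Delta_F)$; this legitimises interchanging $\partial_\lambda$ with $\sum_\ell$ and hence with $\int_X$. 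One should also record that $(\Delta_F-\lambda)^{1-n}G_{\xi_k}$ is the $L^2$-convergent vector with the stated spectral expansion — this follows exactly as \eqref{eq:expdlG_k} does, since $G_{\xi_k}-F_{\xi_k}\in H^2_F(X)$ and repeated application of the bounded operator $(\Delta_F-\lambda)^{-1}$ only smooths it. With these convergence points dispatched, the computation above is a routine bookkeeping of powers of $(\lambda_\ell-\lambda)$ and factorials.
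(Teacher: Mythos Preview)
Your proof is correct and follows essentially the same approach as the paper: both arguments pass through the spectral series $\sum_\ell (\lambda_\ell-\lambda)^{-m}\Lambda_{\xi_k}(\phi_\ell)\Lambda_{\bar\xi_j}(\phi_\ell)$ and identify it simultaneously as $(\partial_\lambda)^n S^{\xi_k\bar\xi_j}$ and as the bilinear pairing $\int_X[(\Delta_F-\lambda)^{1-n}G_{\xi_k}]G_{\bar\xi_j}\,dS$. The paper simply differentiates $n-1$ times in one stroke rather than by induction, but the computation is identical; your discussion of convergence and the care about the bilinear (not Hermitian) pairing on the real basis are welcome additions that the paper leaves implicit.
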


\begin{proof}
We start from \eqref{eq:expdlG_k}, apply $\Lambda_{\bar \xi_j}$ and
differentiate $n-1$ times to obtain
\begin{equation*}
\begin{split}
(\partial_\lambda)^nS^{\xi_k \bar
    \xi_j}(\lambda)& \,=\, n!\sum_{\ell \geq 0}
  (\lambda_\ell-\lambda)^{-1-n}\Lambda_{\xi_k}(\phi_\ell)
  \Lambda_{\bar \xi_j}(\phi_\ell)\\
& \,=\, n!\sum_{\ell \geq 0}
  (\lambda_\ell-\lambda)^{1-n}\frac{\Lambda_{\xi_k}(\phi_\ell)}{\lambda_\ell-\lambda}
  \frac{\Lambda_{\bar \xi_j}(\phi_\ell)}{\lambda_\ell -\lambda}\\
&\,=\, n!\int_X[(\Delta_F-\lambda)^{1-n}G_{\xi_k}]G_{\bar
    \xi_j} dS.
\end{split}
\end{equation*}
\end{proof}

From the expansion in the eigenfunctions basis, we now observe that
$G_{\xi_k}\bot 1$ and is analytic at $0.$
It follows that
\begin{equation}\label{N1}
G_{\xi_k}(\cdot; \lambda=0)={\cal H}_k:=\frac{1}{2\sqrt{\pi}}\big(H_k-\frac{1}{A}\int_XH_k\big), \end{equation}
where
$H_k$ is explicitly given by (\ref{resh}).

The operator
$$\Delta_F^{-1}:1^\bot \to 1^\bot$$
has an integral kernel which is given by the Green function
\begin{equation}\label{N2}G(x, y)=\frac{1}{2\pi A^2}\int_X\int_X \Re \left(\int_b^x\Omega_{y-a}\right)dS(a)dS(b)\,\end{equation}
where
$$\Omega_{a-b}(z)=\int_a^b W(z, \cdot)-2\pi i\sum_{\alpha \beta}(\Im{\mathbb B})^{-1}_{\alpha \beta}v_\alpha(z)\Im\int_a^bv_\beta$$
(see, \cite{Fay92}, formula (2.19)).
Thus, by letting $\lambda$ go to $0$ we obtain the following explicit
expression for all the terms in the r. h. s. of (\ref{result}) :
\begin{equation}\label{N3}\left(\frac{d}{d\lambda}\right)^{n_k}_{|{\lambda=0}}S^{\xi_l \bar \xi_k}(\lambda)=
\int_X [(\Delta_F|_{1^\bot})^{1-n_k}{\cal H}_l ]\overline{{\cal H}_k}\,\end{equation}
where
\begin{equation}\label{N4}(\Delta_F|_{1^\bot})^{1-n_k}{\cal H}_l(x)=\end{equation}

$$\int_XG(x, x_1)\int_XG(x_1, x_2)\dots\int_XG(x_{n_k-1}, y){\cal H}_l(y)dS(y)dS(x_{n_k-1})\dots dS(x_1)\,.$$

\section{Metrics with nontrivial holonomy}

\begin{lem}\label{lastlem} Let $P_1, \dots, P_{2g-2}$ be points of $X$ such that the divisor
$D=P_1+\dots+P_{2g-2}$ is not canonical.
(According to Proposition 2,  this condition is equivalent to the condition
${\rm rank}\,{\cal B}=g$.)
Then we have
$${\rm dim}\,{\rm Ker}\, \Dhol=g-1\,.$$
\end{lem}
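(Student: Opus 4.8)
The plan is to identify $\ker\Dhol$ with the linear system
$$L(D):=\{f\ \text{meromorphic on}\ X:\ (f)+D\ge 0\},\qquad D:=P_1+\dots+P_{2g-2},$$
of functions having at worst simple poles on $D$, and then to read off $\dim\ker\Dhol$ from Riemann--Roch. First I would record the algebro-geometric input: $\deg D=2g-2=\deg K$, and since $D$ is not canonical there is no nonzero holomorphic one-form $\eta$ with $(\eta)\ge D$ (such an $\eta$ would satisfy $(\eta)=D$ for degree reasons, making $D$ canonical); equivalently $i(D):=\dim\{\eta\in H^0(X,K):(\eta)\ge D\}=0$.

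The substantive step is the inclusion $\ker\Dhol\subseteq L(D)$. Let $u\in\ker\Dhol$. Then $\Delta^*u=0$, so $u$ is square-integrable and harmonic on $X\setminus\{P_k\}$, and membership in $\dom(\Dhol)$ forces $a_k(u)=c_k(u)=f_k(u)=0$ for all $k$. I would then invoke the description of $L^2$ harmonic germs on a flat cone of angle $4\pi$ — obtained e.g.\ by passing to the Euclidean chart $w=\xi_k^2$ — according to which, near $P_k$, the function $u$ has a convergent expansion involving only $\log|\xi_k|$ and the powers $\xi_k^m,\bar\xi_k^m$ with $m\ge -1$; the vanishing $a_k(u)=c_k(u)=f_k(u)=0$ then says that the anti-holomorphic part of $u$ near $P_k$ equals $\mathrm{const}+O(|\xi_k|^2)$. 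Hence the $(0,1)$-form $\bar\partial u$ — which is $\partial$-closed, thus anti-holomorphic, on $X\setminus\{P_k\}$ — is bounded near each $P_k$ and extends across the conical points, so $\bar\partial u=\overline{\eta}$ for some $\eta\in H^0(X,K)$. Comparing leading coefficients at $P_k$ shows $\eta(P_k)$ is proportional to $f_k(u)=0$, so $\eta$ vanishes at all the $P_k$; by the first paragraph this forces $\eta=0$, i.e.\ $\bar\partial u=0$. Thus $u$ is holomorphic on $X\setminus\{P_k\}$ with at worst a simple pole at each $P_k$, so $u\in L(D)$.

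The reverse inclusion is immediate: any $f\in L(D)$ is holomorphic off the $P_k$, hence harmonic with $\Delta^*f=0$; it is square-integrable because $|f|^2{\bf m}\sim|d\xi_k|^2$ near each $P_k$; and its local expansion there is purely holomorphic, so $a_k(f)=c_k(f)=f_k(f)=0$ and $f\in\dom(\Dhol)$. Therefore $\ker\Dhol=L(D)$, and Riemann--Roch gives
$$\dim\ker\Dhol=\dim L(D)=\deg D-g+1+i(D)=(2g-2)-g+1+0=g-1.$$
The only delicate point is the local claim about the remainder in the conical asymptotics, but this is exactly the harmonic analysis on the flat $4\pi$-cone underlying the expansion recalled in Section~\ref{sec:extensions}; everything else is Riemann--Roch bookkeeping. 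As a consistency check, in the canonical case one has $i(D)=1$, the same argument gives $\ker\Dhol=L(D)\cong H^0(X,K)$ via $f\mapsto f\omega$, and one recovers $\dim\ker\Dhol=g$ in agreement with the previous lemma.
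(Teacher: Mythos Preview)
Your argument is correct and takes a genuinely different route from the paper. The paper first shows, via the energy identity of Lemma~1 (integration by parts on $X_\epsilon$), that any $u\in\ker\Dhol$ satisfies $\bar\partial u=0$ off the $P_k$; it then invokes Farkas--Kra to produce a multivalued holomorphic Prym differential $\omega$ with nontrivial unitary character and divisor exactly $D$, and identifies $\ker\Dhol$ with the space of Prym differentials sharing that character via $u\mapsto u\omega$, citing Weyl for the dimension $g-1$. You instead identify $\ker\Dhol$ directly with the linear system $L(D)$ and compute its dimension from Riemann--Roch using $i(D)=0$. Your way of forcing $\bar\partial u=0$ --- extending $\overline{\bar\partial u}$ across the $P_k$ to a global holomorphic one-form vanishing on $D$ and then invoking $i(D)=0$ --- is also different from the paper's energy identity (which, incidentally, yields holomorphy of $u$ without any hypothesis on $D$). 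Your approach is more elementary and self-contained, needing only standard Riemann surface theory; the paper's has the virtue of making explicit the link between the non-trivial holonomy metric and Prym differentials, at the cost of two nontrivial external references.
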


\begin{proof}
 Let $u\in {\rm Ker}\, \Dhol$. Then, similarly to the proof of Lemma 1, one can show that $u$ is holomorphic in $X\setminus\{P_1, \dots
P_{2g-2}\}$.
According to Corollary of Theorem III.9.10 from \cite{Farkas}, $D$ is the divisor of a holomorphic multi-valued Prym differential $\omega$
with nontrivial (multiplicative) unitary monodromy $T_{a_\alpha}$, $T_{b_\alpha}$ along basic cycles $a_\alpha$, $b_\alpha$:
$$T_{a_\alpha}\omega=e^{i r_\alpha}\omega; T_{b_\alpha}\omega=e^{i s_\alpha}\omega; \ \ r_\alpha, s_\alpha\in {\mathbb R}\,.$$

Then $u\omega$ is a Prym differential with the same monodromy. Since the space of Prym differentials with a given nontrivial unitary
monodromy has dimension $g-1$ (see \cite{Weyl}, p. 147), one gets
$$ {\rm dim}{\rm ker}\Delta_{\mathrm{hol}}\leq g-1\,.$$
The inequality
$$ {\rm dim}{\rm ker}\Delta_{\mathrm{hol}}\geq g-1\,$$
is obvious: take $\{\omega_1, \dots, \omega_{g-1}\}$ a basis of the space of
Prym differentials with the same monodromy as $\omega$, then the
functions $\omega_1/\omega, \dots, \omega_{g-1}/\omega$ are linearly
independent, belong to $\dom(\Delta_{\mathrm{hol}})\cap \ker \Delta^*$.
 \end{proof}

Lemma \ref{lastlem} and Proposition \ref{DD} imply the following Theorem.

\begin{theorem} Let the divisor $P_1+\dots+P_{2g-2}$ do not belong to the
canonical class. Then
\begin{multline*}
\frac{{\rm det}\Dhol}{{\rm det}\Delta_F}\,= \\
\,c_g \cdot\sum_{{\mathbf n}\in \Ncal(g-2)}
\frac{1}{n_1!\dots n_{2g-2}!}\left |
\begin{array}{ccc}
\left(\frac{d}{d\lambda}\right)^{n_1}S^{\xi_1 \bar \xi_1}(0) &\dots
  & \left(\frac{d}{d\lambda}\right)^{n_{2g-2}}S^{\xi_1 \bar \xi_{2g-2}}(0)\\
\vdots & \cdots & \vdots \\
\left(\frac{d}{d\lambda}\right)^{n_1}S^{\xi_{2g-2} \bar \xi_1}(0) &
                                                                    \dots &
\left(\frac{d}{d\lambda}\right)^{n_{2g-2}}S^{\xi_{2g-2} \bar \xi_{2g-2}}(0)
\end{array}
\right|\,,
\end{multline*}
where all the terms at the right are explicitly given by (\ref{N1},
\ref{N2}, \ref{N3}, \ref{N4}) and the absolute constant $c_g$ depends
only on the genus $g$.
In particular, in genus 2 one gets relation (\ref{showmain}).
\end{theorem}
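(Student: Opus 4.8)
The plan is to assemble the final Theorem from the three ingredients already established in the paper: the comparison formula of Proposition \ref{DD}, the kernel-dimension computation of Lemma \ref{lastlem}, and the explicit formula for the derivatives of $S^{\xi_k\bar\xi_j}$ from Lemma \ref{lem} together with the closed forms \eqref{N1}--\eqref{N4}. First I would start from Proposition \ref{DD}, which gives ${\rm det}(\Dhol-\lambda)=C_g\,{\rm det}\,T(\lambda)\,{\rm det}(\Delta_F-\lambda)$ with $T(\lambda)=S^{ha}(\lambda)$ of size $(2g-2)\times(2g-2)$. Dividing by ${\rm det}(\Delta_F-\lambda)$ and letting $\lambda\to 0$, the left side has a zero of order ${\rm dim}\,{\rm Ker}\,\Dhol - {\rm dim}\,{\rm Ker}\,\Delta_F = (g-1)-1 = g-2$ by Lemma \ref{lastlem} and the fact that $\ker\Delta_F$ is one-dimensional (constants). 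Hence ${\rm det}\,T(\lambda)$ vanishes to order exactly $g-2$ at $\lambda=0$, so
$$
\frac{{\rm det}\,\Dhol}{{\rm det}\,\Delta_F}\,=\,C_g\cdot\lim_{\lambda\to 0}\frac{{\rm det}\,T(\lambda)}{\lambda^{g-2}}\,=\,\frac{C_g}{(g-2)!}\left(\frac{d}{d\lambda}\right)^{g-2}{\rm det}\,T(\lambda)\Big|_{\lambda=0}\,.
$$

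Next I would expand the derivative of the determinant by multilinearity in the columns. Writing $T(\lambda)=[S^{\xi_k\bar\xi_j}(\lambda)]_{k,j}$, the Leibniz rule applied column-by-column gives
$$
\left(\frac{d}{d\lambda}\right)^{g-2}{\rm det}\,T(\lambda)\Big|_{\lambda=0}\,=\,\sum_{\mathbf n\in\Ncal(g-2)}\binom{g-2}{n_1,\dots,n_{2g-2}}\det\!\left[\left(\tfrac{d}{d\lambda}\right)^{n_j}S^{\xi_k\bar\xi_j}(0)\right]_{k,j}\,,
$$
and the multinomial coefficient $(g-2)!/(n_1!\cdots n_{2g-2}!)$ cancels the $(g-2)!$ in the denominator, leaving exactly the sum with weights $1/(n_1!\cdots n_{2g-2}!)$ that appears in the statement; here $c_g:=C_g$. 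This is the same bookkeeping already carried out in the trivial-holonomy case to pass from \eqref{IDENTITY} to \eqref{result}, only with $g-1$ replaced by $g-2$, so it can be done verbatim. Finally, to see that each entry is given by the stated explicit formulas, I invoke Lemma \ref{lem} with $n=n_j$ and $\lambda=0$: $(\partial_\lambda)^{n}S^{\xi_k\bar\xi_j}(0)=n!\int_X[(\Delta_F|_{1^\perp})^{1-n}G_{\xi_k}]\,\overline{G_{\xi_j}}\,dS$, which is precisely \eqref{N3} once one substitutes $G_{\xi_k}(\cdot;0)=\Hcal_k$ from \eqref{N1}, uses the Green-function representation \eqref{N2} for the inverse on $1^\perp$, and iterates it $n_j-1$ times as in \eqref{N4}. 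The case $n_j=0$ is just $S^{\xi_k\bar\xi_j}(0)=-\pi B(P_k,P_j)$ from Proposition \ref{scatt}, with the constant absorbed into $c_g$ after rescaling.

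For the genus-$2$ specialization one takes $g=2$, so $\Ncal(g-2)=\Ncal(0)=\{(0,0)\}$, the sum collapses to the single term with $n_1=n_2=0$, and the $2\times 2$ determinant becomes $\det[S^{\xi_k\bar\xi_j}(0)]=\det[-\pi B(P_k,P_j)]=\pi^2\det[B(P_k,P_j)]$; pulling the $\pi^2$ into the universal constant $c_g$ yields $c_2\,\bigl|B(P_i,P_j)\bigr|$, which is \eqref{showmain}. I do not anticipate a genuine obstacle: the only point requiring care is the order-of-vanishing argument, namely that ${\rm det}\,T(\lambda)$ vanishes to order \emph{exactly} $g-2$ (not more) at $\lambda=0$. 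This is forced by Proposition \ref{DD} combined with the exact kernel dimensions, since the $\zeta$-determinant ${\rm det}(\Dhol-\lambda)$ has a zero of order precisely ${\rm dim}\,{\rm Ker}\,\Dhol$ and ${\rm det}(\Delta_F-\lambda)$ a zero of order precisely $1$; hence the constant $c_g$ is nonzero and the limit is finite and nonzero, so the displayed formula is non-degenerate. Everything else is the routine multilinear-algebra manipulation already rehearsed for the trivial-holonomy case.
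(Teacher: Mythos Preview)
Your proposal is correct and follows exactly the route the paper intends: the paper's own proof consists of the single sentence ``Lemma \ref{lastlem} and Proposition \ref{DD} imply the following Theorem,'' and you have faithfully supplied the details---extracting the order-$(g-2)$ leading coefficient of $\det T(\lambda)$ via the kernel dimensions, expanding by columnwise Leibniz exactly as was done in passing from \eqref{IDENTITY} to \eqref{result}, and invoking Lemma \ref{lem} with \eqref{N1}--\eqref{N4} for the explicit entries. One cosmetic remark: your aside that for $n_j=0$ a constant must be ``absorbed into $c_g$ after rescaling'' is unnecessary at that point, since the general formula already records $S^{\xi_k\bar\xi_j}(0)$ itself; the substitution $S^{\xi_k\bar\xi_j}(0)=-\pi B(P_k,P_j)$ and the absorption of $\pi^2$ into $c_2$ belong only to the genus-$2$ specialization, which you handle correctly.
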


\end{document}